\documentclass[a4paper,oneside,11pt, reqno]{amsart}

\usepackage{mathrsfs}
\usepackage{amsfonts,amssymb,amsmath,amsthm}
\usepackage{url}
\usepackage{enumerate}
\usepackage{enumitem}
\usepackage{amssymb}
\usepackage{bbm}
\usepackage{float}
\usepackage{pstricks}
\usepackage{amscd}
\usepackage{amsmath}
\usepackage{amsxtra}
\usepackage[T1]{fontenc}
\usepackage[utf8]{inputenc}
\usepackage{lipsum}
\usepackage{tikz}
\usetikzlibrary{arrows}
\usetikzlibrary{calc}
\usepackage{hyperref}

\theoremstyle{plain}
\newtheorem{theorem}{Theorem}[section]

\newtheorem{lemma}[theorem]{Lemma}
\newtheorem{proposition}[theorem]{Proposition}

\theoremstyle{definition}

\theoremstyle{remark}
\newtheorem{remark}[theorem]{Remark}

\newtheorem{case-new}{Case}

\numberwithin{equation}{section}

\usepackage{etoolbox}

\makeatletter
\@namedef{subjclassname@2020}{%
  \textup{2020} Mathematics Subject Classification}
\makeatother

\usepackage{mathrsfs}
\usepackage{epsfig}
\usepackage[active]{srcltx}

\newcommand{\ncom}{\newcommand}
\ncom{\ul}{\underline}
\ncom{\ol}{\overline}
\ncom{\bq}{\begin{equation}}
\ncom{\eq}{\end{equation}}
\ncom{\beqn}{\begin{eqnarray*}}
\ncom{\eeqn}{\end{eqnarray*}}
\ncom{\beq}{\begin{eqnarray}}
\ncom{\eeq}{\end{eqnarray}}
\ncom{\nno}{\nonumber}
\ncom{\rar}{\rightarrow}
\ncom{\Rar}{\Rightarrow}
\ncom{\noin}{\noindent}
\ncom{\bc}{\begin{centre}}
\ncom{\ec}{\end{centre}}
\ncom{\sz}{\scriptsize}
\ncom{\rf}{\ref}
\ncom{\sgm}{\sigma}
\ncom{\Sgm}{\Sigma}
\ncom{\dt}{\delta}
\ncom{\Dt}{Delta}
\ncom{\lmd}{\lambda}
\ncom{\Lmd}{\Lambda}
\ncom{\eps}{\epsilon}
\ncom{\pcc}{\stackrel{P}{>}}
\ncom{\dist}{{\rm\,dist}}
\ncom{\sspan}{{\rm\,span}}
\ncom{\im}{{\rm Im\,}}
\ncom{\sgn}{{\rm sgn\,}}
\ncom{\ba}{\begin{array}}
\ncom{\ea}{\end{array}}
\ncom{\eop}{\hfill{{\rule{2.5mm}{2.5mm}}}}
\ncom{\eoe}{\hfill{{\rule{1.5mm}{1.5mm}}}}
\ncom{\eof}{\hfill{{\rule{1.5mm}{1.5mm}}}}
\ncom{\hone}{\mbox{\hspace{1em}}}
\ncom{\htwo}{\mbox{\hspace{2em}}}
\ncom{\hthree}{\mbox{\hspace{3em}}}
\ncom{\hfour}{\mbox{\hspace{4em}}}
\ncom{\hsev}{\mbox{\hspace{7em}}}
\ncom{\vone}{\vskip 2ex}
\ncom{\vtwo}{\vskip 4ex}
\ncom{\vonee}{\vskip 1.5ex}
\ncom{\vthree}{\vskip 6ex}
\ncom{\vfour}{\vspace*{8ex}}
\ncom{\norm}{\|\;\;\|}
\ncom{\integ}[4]{\int_{#1}^{#2}\,{#3}\,d{#4}}
\ncom{\inp}[2]{\langle{#1},\,{#2} \rangle}
\ncom{\Inp}[2]{\Langle{#1},\,{#2} \Langle}
\ncom{\vspan}[1]{{{\rm\,span}\#1 \}}}
\ncom{\dm}[1]{\displaystyle {#1}}

\usepackage{ulem}

\keywords{Invariant subspaces, Brownian shift, Inner functions, Hardy
spaces, Brownian unitary, 2-isometry, 3-isometry}

\subjclass[2020]{30H10, 30J05, 47A15, 47A20, 60J65}

\begin{document}

\title[$3\times 3$ BROWNIAN
Shift]{Invariant Subspaces and the $C_{00}$-Property of $3$-Brownian
Shifts}

\author[R. Nailwal]{Rajkamal Nailwal}
\address{Rajkamal Nailwal, Institute of Mathematics, Physics and Mechanics, Ljubljana, Slovenia.}
\email{rajkamal.nailwal@imfm.si, raj1994nailwal@gmail.com}

\begin{abstract}
In this paper, we introduce a $3$-Brownian shift $T_{\sigma, \theta}$ on the Hilbert space
$H^2(\mathbb D^2)\oplus H^2(\mathbb D)\oplus \mathbb C,$ 
which is a natural extension of the classical Brownian shift $B_{\sigma, \theta}$ on  $H^2(\mathbb D)\oplus \mathbb C$. This is motivated by Brownian extensions in the context of 3-isometries recently developed by A. Crăciunescu and L. Suciu. We investigate the problem of unitary equivalence for  $3$-Brownian shifts on invariant subspaces of the type $\mathcal M_0 \oplus \mathcal M_1,$ where $\mathcal M_0 \subseteq H^2(\mathbb D^2)$  and $\mathcal M_1 \subseteq H^2(\mathbb D)\oplus \mathbb C.$ Here, $\mathcal M_1$ turns out to be an invariant subspace of the respective Brownian shift $B_{\sigma, \theta}$. We also study the asymptotic behaviour of the normalized $3$-Brownian shifts. This work is motivated by Richter \cite{R88} and very recently by work on Brownian shift on $H^2(\mathbb D)\oplus \mathbb C$ in \cite{DDS2025}.
\end{abstract}
\maketitle
\section{Introduction}

Let $\mathbb C$, $\mathbb D$, and $\mathbb T$ denote the complex plane, the open unit disk, and the unit circle, respectively. 
Let $\mathcal H$ be a complex separable Hilbert space, and let $\mathcal{B}(\mathcal H)$ denote the $C^*$-algebra of bounded linear operators on $\mathcal H$. 
For $V \in \mathcal{B}(\mathcal H)$, the subspaces $\mathcal{N}(V)$ and $\mathcal{R}(V)$ denote the kernel (null space) and the range of $V$, respectively. A closed subspace $M \subseteq \mathcal H$ is said to be {\it invariant} under $T \in \mathcal B(\mathcal H)$ if $T(M)\subseteq M.$

In this paper, we consider the Hardy spaces of the unit disk and the unit bidisk, denoted by $H^2(\mathbb D)$ and $H^2(\mathbb {D} ^2)$, respectively. The Hardy space of the unit disk is defined by
\[
H^2(\mathbb D)
=
\left\{
f(z)=\sum_{n\geq 0} a_n z^n :
\sum_{n\geq 0} |a_n|^2 < \infty
\right\},
\]
with inner product
\[
\langle f,g\rangle
=
\sum_{n\geq 0} a_n \overline{b_n},
\]
where $f(z)=\sum_{n\geq 0}  a_n z^n$ and $g(z)=\sum_{n\geq 0}  b_n z^n$.

The Hardy space of the unit bidisk is given by
\[
H^2(\mathbb D^2)
=
\left\{
F(z_1,z_2)
=
\sum_{m,n \geq 0} a_{m,n} z_1^m z_2^n :
\sum_{m,n \geq 0} |a_{m,n}|^2 < \infty
\right\},
\]
equipped with the corresponding $\ell^2$ coefficient norm.

We will also need vector-valued Hardy spaces. For a Hilbert space $E$, let 
$H^2(\mathbb D;E)$ denote the space of $E$-valued analytic functions on $\mathbb D$ 
with square-summable Taylor coefficients.

Given the Hilbert spaces $E$ and $F,$ we denote by 
$H^\infty(\mathbb D;\mathcal B(E,F))$ the Banach space of bounded analytic 
$\mathcal B(E,F)$-valued functions on $\mathbb D$, where $\mathcal B(E,F)$ denotes 
the space of bounded linear operators from $E$ to $F$. In the scalar case, when
$E=F=\mathbb C$, we simply write $H^\infty(\mathbb D)$.

For each $\varphi \in H^\infty(\mathbb D;\mathcal B(E,F)),$ the multiplication operator $M_\varphi $ is defined by
\[
M_\varphi : H^2(\mathbb D;E) \to H^2(\mathbb D;F), 
\qquad (M_\varphi f)(z)=\varphi(z)f(z).
\]

A function $\varphi \in H^\infty(\mathbb D;\mathcal B(E,F))$ is called 
{\it inner} if $M_\varphi$ is an isometry. Equivalently, the radial boundary values 
satisfy that $\varphi(e^{it})$ is an isometry from $E$ to $F$ for almost every 
$t\in[0,2\pi)$.

 For such an inner function $\varphi$, the associated model space is given by
\[
K_\varphi := H^2(\mathbb D;F)\ominus \varphi H^2(\mathbb D;E).
\]

 Classically, a Brownian shift in the Hilbert space $H^2(\mathbb D)\oplus\mathbb C$ is defined by
\[
B_{\sigma,e^{i\theta}}
=
\begin{bmatrix}
S & \sigma(1\otimes1)\\
0 & e^{i\theta}
\end{bmatrix},
\]
where $S$ denotes the unilateral shift on $H^2(\mathbb D)$, $\sigma>0$ is a covariance parameter, and $\theta\in[0,2\pi)$ (\cite[Definition~5.5]{AS95}).  The operator \( 1 \otimes 1 : \mathbb{C} \to H^2(\mathbb D) \) is defined by assigning to each scalar \( \alpha \in \mathbb{C} \) the constant function on \( \mathbb{D} \) with the value \( \alpha \), i.e.,
\[
(1 \otimes 1)(\alpha)(z) = \alpha, \quad \text{for all } z \in \mathbb{D}.
\] The Brownian shift $B_{\sigma, \theta}$   is a special case of Brownian unitaries, also referred to as  B-unitary (\cite[Definition~1.1]{CJJS24I}), which plays a special role when studying $2$-isometries (\cite[Section~5]{AS95}, also see \cite{JJS2026, S23}). 

Motivated by Brownian unitaries in the context of $2$-isometries, Crăciunescu and  Suciu recently developed the following
$3$-Brownian unitaries in the context of $3$-isometries \cite{CS2024}. 
Given a Hilbert space $\mathcal{H}$  with an orthogonal decomposition
\[
\mathcal{H} = \mathcal{H}_0 \oplus \mathcal{H}_1 \oplus \mathcal{H}_2,
\]
an operator $B \in \mathcal{B}(\mathcal{H})$ is called a {\it 3-Brownian unitary} 
if, with respect to this decomposition, it has the block matrix representation
\[
B =
\begin{bmatrix}
V_0 & \sigma E_0 & 0 \\
0   & V_1        & \sigma E_1 \\
0   & 0          & U
\end{bmatrix},
\]
where $V_j, E_j$ are isometries satisfying
\[
\mathcal{N}(V_j^*) = \mathcal{R}(E_j), \qquad j=0,1,
\]
$U$ is unitary, and $\sigma > 0$.

The Brownian shift $B_{\sigma, \theta}$ first appears as part of studying time shift operators (\cite[Section~5]{AS95}) and has become an important member of the larger class called $B$-operator which turns out to be a very useful object in understanding several properties in operator theory \cite{CJJS24}. Motivated by this, we introduce a $3$-Brownian shift on $H^2(\mathbb D^2)\oplus H^2(\mathbb D)\oplus \mathbb C.$  A {\it $3$-Brownian shift } is a  3-Brownian unitary on $H^2(\mathbb D^2)\oplus H^2(\mathbb D)\oplus \mathbb C$ of the following form

\[
T_{\sigma,\theta}
=
\begin{bmatrix}
M_{z_1} & \sigma J & 0\\
0 & S & \sigma(1\otimes1)\\
0 & 0 & e^{i\theta}
\end{bmatrix},
\]
where $M_{z_1}$ denotes multiplication by $z_1$ on $H^2(\mathbb D^2)$, $S$ is the unilateral shift on $H^2(\mathbb D)$, $J:H^2(\mathbb D)\to H^2(\mathbb D^2)$ is the canonical isometric embedding defined by $(Jf)(z_1,z_2)=f(z_2),$ $\sigma>0$  and $\theta \in [0,2\pi)$. The operator $T_{\sigma,\theta}$ can be viewed as the Brownian shift analogue in the setting of $3$-isometries.

Let $\mathcal M_1$ and $\mathcal M_2$ be two invariant subspaces of $T_{\sigma_1,\theta_1}$ and 
$T_{\sigma_2,\theta_2}$, respectively. We say that $\mathcal M_1$ and $\mathcal M_2$ are 
{\it unitarily equivalent}, and write $T_{\sigma_1,\theta_1}\big|_{\mathcal M_1} \cong  T_{\sigma_2,\theta_2}\big|_{\mathcal M_2}$, if there exists 
a unitary operator $U : \mathcal M_1 \to \mathcal M_2$ such that
\[
U\, T_{\sigma_1,\theta_1}\big|_{\mathcal M_1}
=
T_{\sigma_2,\theta_2}\big|_{\mathcal M_2}\, U .
\]

Motivated by the results of Richter \cite{R88} and more recently by the work of Das et al. \cite{DDS2025}, we investigate the problem of unitary equivalence of $3$-Brownian shifts when restricted to certain lifted type invariant subspaces (cf. \cite{D11, DS08}). In the classical case of the Hardy space of the unit disk, the famous Beurling theorem yields that all nonzero invariant subspaces of $H^2(\mathbb D)$ are unitarily equivalent. In the polydisk case, this was investigated in \cite{ACD86} (also see \cite{R91}).

A fundamental structural feature of the operator $T_{\sigma,\theta}$ is that its
invariant subspaces necessarily project to invariant subspaces of the lower
$2\times2$ Brownian shift. More precisely, if
\[
P:\mathcal H \to H^2(\mathbb D)\oplus\mathbb C,\qquad P(F,f,\alpha)=(f,\alpha),
\]
denotes the canonical projection and
\[
B_{\sigma,e^{i\theta}}(f,\alpha)=(zf+\sigma\alpha,\ e^{i\theta}\alpha)
\]
is the lower Brownian shift, then one has the intertwining relation
\[
PT_{\sigma,\theta}=B_{\sigma,e^{i\theta}}P.
\]
As a consequence, for any $T_{\sigma,\theta}$-invariant subspace $\mathcal M\subset\mathcal H$,
the projected subspace $P(\mathcal M)$ is automatically invariant for $B_{\sigma,e^{i\theta}}$. This perspective justifies our focus on invariant subspaces of $T_{\sigma,\theta}$ whose
lower components are prescribed to be of certain types, namely Type~I or Type~II (to be defined below).

This observation has an important conceptual implication. The invariant subspace
structure of the classical Brownian shift is already completely understood, and its
invariant subspaces fall into well-known families (Type~I and Type~II in the sense of
Agler--Stankus \cite[p. 21]{AS95}). Since every invariant subspace of $T_{\sigma,\theta}$ must project onto
one of these known subspaces, there is no additional freedom at the level of the lower
block. 
In \cite{AS95}, Agler and Stankus present the following description of the invariant
subspaces for Brownian shifts. If $\mathscr M$ is a nonzero closed
subspace of $H^2(\mathbb D)\oplus\mathbb C$, then $\mathscr M$ is invariant under
$B_{\sigma,e^{i\theta}}$ if and only if it has one of the following forms:
\begin{equation}\label{T1}
    \mathscr M = \varphi H^2(\mathbb D) \oplus \{0\} \quad (\textit{Type I}),
\end{equation}
for some inner function $\varphi\in H^\infty(\mathbb D)$, or
\begin{equation} \label{Type-2}
\mathscr M = \mathbb C
\begin{bmatrix} g \\[2pt] 1 \end{bmatrix}
\oplus \bigl(\varphi H^2 (\mathbb D) \oplus \{0\}\bigr) \quad (\textit{Type II}),
\end{equation}
for some inner function $\varphi\in H^\infty(\mathbb D)$  such that $\varphi(e^{i\theta})$
exists, and
\begin{equation*}
g(z)=\sigma\cdot\frac{\overline{\varphi(e^{i\theta})}\,\varphi(z)-1}{z-e^{i\theta}},
\qquad z\in\mathbb D.
\end{equation*}
It is also worth noting that $g \perp \varphi H^2(\mathbb D)$
(see \cite[p.~23]{AS95}).

Note that in order to describe the invariant subspaces of $T_{\sigma,\theta}$ of the form
\[
\mathcal M =
\begin{bmatrix}
\mathcal M_0 \\
\mathcal M_1
\end{bmatrix},
\]
where $\mathcal M_1$ is a nontrivial invariant subspace of the corresponding Brownian shift $B_{\sigma,\theta}$, it suffices to determine $\mathcal M_0$. In the next section, we will deduce that $\mathcal M_0$ has the following two choices:
\begin{itemize}
    \item[(a)] If $\mathcal M_1$ is of Type I (see \eqref{T1})  for some inner function $\varphi \in H^{\infty}(\mathbb D)$, then 
$$\mathcal M_0
=
 \varphi_{z_2}\, H^2(\mathbb D^2) \;
\oplus\;
\Psi H^2(\mathbb D; \mathcal E_{\mathcal{M}})
$$
for some Hilbert space $\mathcal E_{\mathcal{M}},$ and  an operator-valued inner function
$
\Psi \in H^\infty\!\big(\mathbb D; \mathcal B(\mathcal E_{\mathcal{M}}, K_\varphi)\big).
$ Here $\varphi_{z_2}(z_1,z_2):=\varphi(z_2).$ We call the respective $\mathcal M$ as {\it lifted Type I invariant subspace} of $T_{\sigma, \theta}.$
    \item[(b)] If $\mathcal M_1$ is of Type II (see \eqref{Type-2}) for some inner function $\varphi \in H^{\infty}(\mathbb D)$, then 
    \[
\mathcal M_0
=
 \varphi_{z_2}\, H^2(\mathbb D^2) 
\;\oplus\;
g_{z_2}H^2_{z_1}(\mathbb D)
\;\oplus\;
\Psi\,H^2(\mathbb D;\mathcal E_{\mathcal{M}})
\]
for some Hilbert space $\mathcal E_{\mathcal{M}},$ and  an operator-valued inner function
$
\Psi \in H^\infty\!\big(\mathbb D; \mathcal B(\mathcal E_{\mathcal{M}}, K_\varphi)\big) $ satisfying  $
\Psi(0)\mathcal E_{\mathcal M} \subset g^\perp \subset K_\varphi
$.
Here $H^2_{z_1}(\mathbb D):=i_{z_1}(H^2(\mathbb D))$ is a closed subspace of $H^2(\mathbb D^2)$ where  $i_{z_1}:H^2(\mathbb D)\rightarrow H^2(\mathbb D^2)$ is an isometry defined by $(i_{z_1}f)(z_1,z_2):=f(z_1), f \in H^2(\mathbb D).$ The notation $g_{z_2}$ is defined by $g_{z_2}(z_1,z_2):=g(z_2)$  We call the respective $\mathcal M$ as {\it lifted Type II invariant subspace} of $T_{\sigma, \theta}.$ As $g \in K_{\varphi},$ the orthogonal complement $g^{\perp}$ is taken in $K_{\varphi}.$
\end{itemize}

Let us also consider the trivial case. Observe that if $\mathcal M_1=\{0\}$, then a necessary and sufficient condition for
$
\mathcal M_0 \, \oplus\, \{ 0\}
$
to be invariant under $T_{\sigma,\theta}$ is that $\mathcal M_0$ be an $M_{z_1}$-invariant subspace of $H^2(\mathbb D^2)$. Moreover, two such invariant subspaces are unitarily equivalent if and only if their first components are unitarily equivalent with respect to $M_{z_1}$. This situation can be completely described by the Beurling–Lax–Halmos theorem \cite[Theorem~2.1, p. 239]{FF90}.

We now state our main result of this paper.

\begin{theorem} \label{main}
Let $\theta_1,\theta_2\in[0,2\pi)$ and $\sigma_1,\sigma_2>0$. 
Let
\[
\mathcal M=
\begin{bmatrix}
\mathcal M_0\\
\mathcal M_1
\end{bmatrix},
\qquad
\mathcal N=
\begin{bmatrix}
\mathcal N_0\\
\mathcal N_1
\end{bmatrix}
\]
be nonzero closed invariant subspaces of lifted Type~I or Type~II for the $3$-Brownian shifts 
$T_{\sigma_1,\theta_1}$ and $T_{\sigma_2,\theta_2}$, respectively. Then
\[
T_{\sigma_1,\theta_1}\big|_{\mathcal M}
\cong
T_{\sigma_2,\theta_2}\big|_{\mathcal N}
\]
if and only if $\sigma_1=\sigma_2,\, \dim \mathcal E_{\mathcal M}=\dim \mathcal E_{\mathcal N}$ and one of the following holds:

\begin{enumerate}
\item[(i)] Both $\mathcal M$ and $\mathcal N$ are of lifted Type~I.
$
$
\medskip
\item[(ii)] Both $\mathcal M$ and $\mathcal N$ are of lifted Type~II, where
\[
\begin{aligned}
\mathcal M_0 &= \varphi_{z_2}^{(1)}H^2(\mathbb D^2)
\;\oplus\;
g_{z_2}^{(1)} H^2_{z_1}(\mathbb D)
\;\oplus\;
\Psi^{(1)} H^2(\mathbb D;\mathcal E_{\mathcal M}),\\
\mathcal N_0 &= \varphi_{z_2}^{(2)}H^2(\mathbb D^2)
\;\oplus\;
g_{z_2}^{(2)} H^2_{z_1}(\mathbb D)
\;\oplus\;
\Psi^{(2)} H^2(\mathbb D;\mathcal E_{\mathcal N}),
\end{aligned}
\]
and
\[
\begin{aligned}
\mathcal M_1 &= \mathbb C
\begin{bmatrix}
g^{(1)}\\
1
\end{bmatrix}
\oplus
\bigl(\varphi^{(1)}H^2(\mathbb D)\oplus\{0\}\bigr),\\
\mathcal N_1 &= \mathbb C
\begin{bmatrix}
g^{(2)}\\
1
\end{bmatrix}
\oplus
\bigl(\varphi^{(2)}H^2(\mathbb D)\oplus\{0\}\bigr),
\end{aligned}
\]
with
\[
g_j(z)
=
\sigma_j\cdot
\frac{\overline{\varphi^{(j)}(e^{i\theta_j})}\,\varphi^{(j)}(z)-1}{z-e^{i\theta_j}},
\qquad j=1,2,
\]
for some inner function $\varphi^{(j)}\in H^\infty(\mathbb D)$ and  for some operator-valued inner functions
\begin{equation}\label{A}
\Psi^{(1)} \in H^\infty\!\big(\mathbb D;\mathcal B(\mathcal E_{\mathcal M}, K_{\varphi^{(1)}})\big)\, \text{with} \,\,\,
\Psi(0)\mathcal E_{\mathcal M} \subset {g^{(1)}}^\perp \subset K_{\varphi^{(1)}}, \tag{A}
\end{equation}
\begin{equation}\label{B}
\Psi^{(2)} \in H^\infty\!\big(\mathbb D;\mathcal B(\mathcal E_{\mathcal M}, K_{\varphi^{(2)}})\big)\, \text{with }\,\, \,
\Psi(0)\mathcal E_{\mathcal N} \subset {g^{(2)}}^\perp \subset K_{\varphi^{(2)}}.\tag{B}
\end{equation}
such that
\[
\theta_1=\theta_2,\qquad \|g^{(1)}\|=\|g^{(2)}\|,
\]
\end{enumerate}
\end{theorem}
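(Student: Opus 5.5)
Write $T=T_{\sigma_1,\theta_1}|_{\mathcal M}$ and $T'=T_{\sigma_2,\theta_2}|_{\mathcal N}$. The plan is to read off unitary invariants of $T$ from its defect operators, and then build the unitary explicitly in the converse direction.

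\emph{Necessity.} First, $T$ is a $3$-isometry, and so is its restriction to any invariant subspace. Its defect operators
\[
\Delta_1=T^*T-I,\qquad \Delta_2=T^{*2}T^2-2T^*T+I
\]
are therefore unitary invariants of $T$, because they are compressions of the ambient defects to $\mathcal M$. A direct block computation for $T_{\sigma,\theta}$ gives the following.
\begin{itemize}
\item[(1)] $\Delta_2=2\sigma^4 P_{\mathbb C}$, where $P_{\mathbb C}$ is the projection onto the last coordinate.
\item[(2)] $\Delta_1=\sigma^2(P_{H^2(\mathbb D)}+P_{\mathbb C})$ plus off-diagonal cross terms, which come from $J^*M_{z_1}$, $S^*(1\otimes 1)$ and similar products.
\end{itemize}

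Consequences of (1):
\begin{itemize}
\item For lifted Type I we have $\mathcal M_1\subset H^2\oplus\{0\}$, so $\Delta_2|_{\mathcal M}=0$.
\item For lifted Type II, $\Delta_2|_{\mathcal M}$ has rank one. Its nonzero eigenvalue is $2\sigma^4/(\|g\|^2+1)$ up to the $\mathcal M_0$-component of the vector $v=(0,g,1)$. One must check the precise form of $v$ in $\mathcal M$: the vector orthogonal to the Type-I-like part should be $(0,g,1)$, since the $\mathcal M_0$ part contains $0$ in that direction.
\end{itemize}
Hence Type I and Type II are never equivalent.

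In case (ii) I would also use the following.
\begin{itemize}
\item The vector $v$ spans $\mathcal M\ominus T\mathcal M$-related data. Since $T^*v$ is computable and $P_{\mathbb C}$ intertwines with $e^{i\theta}$, the scalar $e^{i\theta}$ is recovered as the eigenvalue of the compression of $T$ to $\mathcal N(\Delta_2)^\perp$, because $\mathcal N(\Delta_2)$ is $T$-invariant. This forces $\theta_1=\theta_2$.
\item The nonzero eigenvalue of $\Delta_2$, together with that of $\Delta_1$ restricted to suitable spaces, determines $\sigma$ and $\|g\|$. The point is that restricting $T$ to $\mathcal N(\Delta_2)$, a lifted Type I space, yields a $2$-isometry whose defect $\sigma^2 P_{H^2(\mathbb D)}$ gives $\sigma$.
\end{itemize}

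For $\dim\mathcal E$, I would use the wandering subspace $\mathcal M\ominus T\mathcal M$ of the Type I part. It decomposes as the wandering space of $\varphi_{z_2}H^2(\mathbb D^2)$ under $M_{z_1}$ (dimension $\dim H^2(\mathbb D)$, i.e.\ infinite) plus $\mathcal E_{\mathcal M}$ and possibly some of the $\mathcal M_1$-part. A better invariant is $\mathcal N(\Delta_1)$ restricted to the lowest level, which is $M_{z_1}$-invariant inside $\mathcal M_0$. It splits off as $\varphi_{z_2}H^2(\mathbb D^2)\oplus\Psi H^2(\mathcal E)$, and $\dim\mathcal E_{\mathcal M}$ is read off after quotienting out the infinite-multiplicity piece. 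This requires care: I expect to compare $\dim\big(\mathcal M_0\ominus(\varphi_{z_2}H^2(\mathbb D^2)+\ldots)\big)$ via the unitary's action on the filtration $\mathcal N(\Delta_2)\supset \mathcal N(\Delta_1)$.

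\emph{Sufficiency.} Use Beurling-type unitaries. First, $V_\varphi\colon H^2\to\varphi H^2$ is multiplication by $\varphi$, and it lifts to $\varphi_{z_2}$ on $H^2(\mathbb D^2)$, commuting with $M_{z_1}$ and $J$-compatible. Then take a unitary $W\colon\mathcal E_{\mathcal M}\to\mathcal E_{\mathcal N}$ and send $\Psi^{(1)}h\mapsto \Psi^{(2)}Wh$. In Type II, additionally send $(g^{(1)}_{z_2}f(z_1))\mapsto(g^{(2)}_{z_2}f(z_1))$ scaled by $\|g^{(2)}\|/\|g^{(1)}\|=1$, and $(0,g^{(1)},1)\mapsto(0,g^{(2)},1)$. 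Checking that this intertwines uses $\theta_1=\theta_2$, $\sigma_1=\sigma_2$, and $\Psi(0)\mathcal E\perp g$, which makes the pieces orthogonal.

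The main obstacle is the verification that $T(0,g,1)$ decomposes correctly: its $\mathcal M_0$ component is $\sigma g_{z_2}$, which lies in $g_{z_2}H^2_{z_1}$. We also need that the inner products $\langle g,\varphi H^2\rangle=0$ are preserved, and in necessity that $\dim\mathcal E$ is truly a unitary invariant.
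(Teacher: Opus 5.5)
\textbf{Overall.} Your necessity argument via the defect operators is a genuine alternative to the paper's route, and it works for $\sigma$, the type, $\theta$ and $\|g\|$. It does not yet establish $\dim\mathcal E_{\mathcal M}=\dim\mathcal E_{\mathcal N}$, and your sufficiency map fails to intertwine in Type~II as written. Both gaps are fixable.

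\textbf{The defect-operator route.} The paper instead shows by a norm computation that the intertwiner splits as $U_1\oplus U_2$ (Lemma~\ref{cor-dec}(i)). It then quotes \cite[Theorem~2.1]{DDS2025} for $\theta$ and $\|g\|$. Your version is self-contained once the computations are corrected.
\begin{enumerate}
\item There are no cross terms, since $J^*M_{z_1}=0$ and $(1\otimes 1)^*S=0$. So $T^*T-I=\sigma^2(P_{H^2(\mathbb D)}+P_{\mathbb C})$ and $\mathcal N(\Delta_1)=\mathcal M_0\oplus 0$.
\item $\Delta_2=\sigma^4P_{\mathbb C}$, not $2\sigma^4P_{\mathbb C}$. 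On a lifted Type~II space its nonzero eigenvalue is $\sigma^4/(1+\|g\|^2)$.
\item The compression of $T$ to $\mathcal M\ominus\mathcal N(\Delta_2)=\mathbb C(0,g,1)^t$ is $e^{i\theta}$. This holds because $B_{\sigma,\theta}(g,1)^t-e^{i\theta}(g,1)^t=(\sigma\overline{\varphi(e^{i\theta})}\varphi,0)^t\perp(g,1)^t$.
\end{enumerate}

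\textbf{The gap: $\dim\mathcal E_{\mathcal M}=\dim\mathcal E_{\mathcal N}$.} The mechanism you sketch cannot deliver this. Since $\mathcal E_{\mathcal M}$ is separable, $M_{z_1}|_{\mathcal M_0}$ is a shift of multiplicity $\aleph_0$ whatever $\mathcal E_{\mathcal M}$ is. So its unitary class does not see $\dim\mathcal E_{\mathcal M}$, and ``quotienting out'' an infinite multiplicity is meaningless. For example, take $\varphi(z)=z$. Then $\mathcal M_0=z_2H^2(\mathbb D^2)$ (with $\mathcal E=\{0\}$) and $\mathcal M_0=H^2(\mathbb D^2)$ (with $\mathcal E=\mathbb C$) give unitarily equivalent $M_{z_1}|_{\mathcal M_0}$. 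The filtration $\mathcal N(\Delta_1)\subset\mathcal N(\Delta_2)$ alone also does not single out $\varphi_{z_2}H^2(\mathbb D^2)$ inside $\mathcal M_0$.

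The missing idea is that the intertwiner carries $\varphi^{(1)}_{z_2}H^2(\mathbb D^2)$, plus $g^{(1)}_{z_2}H^2_{z_1}(\mathbb D)$ in Type~II, onto its counterpart. This comes from the off-diagonal block $\sigma J$. For $x=(0,f,\alpha)^t\in\mathcal M$ with $U_2(f,\alpha)^t=(f',\alpha')^t$, comparing first coordinates in $UTx=TUx$ gives $U_1(Jf)=Jf'$. The argument then runs as follows (drop the $g$-terms in Type~I).
\begin{enumerate}
\item $U_1$ maps $J(\varphi^{(1)}H^2(\mathbb D)+\mathbb Cg^{(1)})$ onto $J(\varphi^{(2)}H^2(\mathbb D)+\mathbb Cg^{(2)})$.
\item Since $U_1$ commutes with $M_{z_1}$, it maps the $M_{z_1}$-invariant subspaces these generate, $\varphi_{z_2}H^2(\mathbb D^2)\oplus g_{z_2}H^2_{z_1}(\mathbb D)$, onto each other.
\item Hence $U_1$ maps the complements $\Psi^{(1)}H^2(\mathbb D;\mathcal E_{\mathcal M})$ and $\Psi^{(2)}H^2(\mathbb D;\mathcal E_{\mathcal N})$ onto each other, and Proposition~\ref{shift-E} gives the equality of dimensions.
\end{enumerate}
This is exactly the content of Lemma~\ref{cor-dec}(iii)--(v).

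\textbf{Sufficiency.} Your unitary is the paper's, but as written it does not intertwine in Type~II. The middle coordinate of $T(0,g^{(1)},1)^t$ is $e^{i\theta}g^{(1)}+\sigma\overline{\varphi^{(1)}(e^{i\theta})}\varphi^{(1)}$. Under $\varphi^{(1)}f\mapsto\varphi^{(2)}f$, its $\varphi^{(1)}H^2(\mathbb D)$-part becomes $\sigma\overline{\varphi^{(1)}(e^{i\theta})}\varphi^{(2)}$. But $T(0,g^{(2)},1)^t$ requires $\sigma\overline{\varphi^{(2)}(e^{i\theta})}\varphi^{(2)}$. So the map works only when $\varphi^{(1)}(e^{i\theta})=\varphi^{(2)}(e^{i\theta})$.

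To repair it, either of the following works.
\begin{enumerate}
\item Normalize $\varphi^{(j)}(e^{i\theta})=1$ first. Replacing $\varphi^{(j)}$ by $\overline{\varphi^{(j)}(e^{i\theta})}\varphi^{(j)}$ changes neither $\varphi^{(j)}H^2(\mathbb D)$ nor $g^{(j)}$.
\item Insert the unimodular factor $\varphi^{(1)}(e^{i\theta})\overline{\varphi^{(2)}(e^{i\theta})}$ consistently on both $\varphi^{(1)}H^2(\mathbb D)$ and $\varphi^{(1)}_{z_2}H^2(\mathbb D^2)$.
\end{enumerate}
Your ``main obstacle'' check looked only at the first coordinate of $T(0,g,1)^t$. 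The paper's verification passes over the same point, by applying $\widetilde U$ to $B_{\sigma,\theta_1}(g^{(1)},1)^t$, which lies outside its domain.

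\textbf{A smaller point.} The condition $\Psi(0)\mathcal E\perp g$ does not by itself make the pieces of $\mathcal M_0$ orthogonal; you need $\Psi(z)\mathcal E\perp g$ for all $z\in\mathbb D$. For instance, take $\mathcal E=\mathbb C$ and $\Psi(z)c=c\,z\,g/\|g\|$. However, orthogonality is already part of the $\oplus$ in the hypotheses, so you can simply use it.
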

\begin{remark}
 The conditions $\eqref{A}$ and $\eqref{B}$ ensure that the decomposition of $\mathcal M_0$ and $\mathcal N_0$  in Theorem~\ref{main}(ii) is orthogonal, which is crucial for our result. For example, if we choose $\mathcal E_{\mathcal M}$ to be $K_{\varphi^{(1)}}$ and $\psi^{(1)}(z)=I$ where $ I$ is the identity operator, then the components $g_{z_2}^{(1)} H^2_{z_1}(\mathbb D)
$ and $
 H^2(\mathbb D;K_{\varphi^{(1)}})$ are not orthogonal as $g^{(1)}\in K_{\varphi^{(1)}}$.
\end{remark}

We now relate our analysis to the asymptotic property of Brownian shifts. 
Recall that if $T$ is a contraction on a Hilbert space $\mathcal H$, then $T$ is said to be {\it pure}, denoted $T\in C_{\cdot0}$, if
\[
\text{SOT-}\lim_{m\to\infty} T^{*m} = 0,
\]
where SOT denotes the strong operator topology. 
Moreover, $T$ is said to belong to the class $C_{00}$ if both $T$ and $T^*$ are pure.

Given $T \in B(\mathcal H)$ is said to be {\it power bounded} if the sequence  $\{\|T^n\|\}_{n=0}^{\infty}$ of real numbers is bounded.
It is known that the classical Brownian shift $B_{\sigma,\theta}$ is not power-bounded. 
In particular, it is not similar to a contraction. 
Nevertheless, after a suitable normalization, $B_{\sigma,\theta}$ belongs to the class $C_{00}$ \cite[Section~3]{DDS2025}. 

This behaviour serves as a guiding principle in our setting. 
In fact, using the corresponding property of $B_{\sigma,\theta}$, we first show that the $3$-Brownian shift $T_{\sigma,\theta}$ is also not power-bounded, and hence not similar to a contraction. 
We then prove that an analogous asymptotic phenomenon persists: for every $\sigma>0$ and $\theta\in[0,2\pi)$,
\[
\frac{1}{\|T_{\sigma,e^{i\theta}}\|}\, T_{\sigma,e^{i\theta}} \in C_{00}.
\]

Note that the operator $T_{\sigma,e^{i\theta}}$ admits a decomposition as a perturbation of an isometry. 
Indeed,
\begin{equation*}
T_{\sigma,e^{i\theta}} = T_s + R,
\end{equation*}
where
\[
T_s=
\begin{bmatrix}
M_{z_1} & 0 & 0\\
0 & S & 0\\
0 & 0 & 1
\end{bmatrix}
\]
is an isometry, and
\[
R=
\begin{bmatrix}
0 & \sigma J & 0\\
0 & 0 & \sigma(1\otimes 1)\\
0 & 0 & e^{i\theta}-1
\end{bmatrix}.
\]
This is where it differs from the classical Brownian shift, as
in contrast to the classical case, the perturbation $R$ is no longer of rank one (\cite[P. 87]{DDS2025}). 
Indeed, due to the presence of the operator $J$, the perturbation is of infinite rank. 
Thus, $T_{\sigma,e^{i\theta}}$ may be regarded as an infinite-rank perturbation of the isometry $T_s$.

\subsection{Plan of the paper}

In Section~\ref{sec-2}, we describe invariant subspaces of $T_{\sigma,\theta}$ of the form
$
\mathcal M_0 \oplus \mathcal M_1,
$
where $\mathcal M_0 \subseteq H^2(\mathbb D^2)$ and $\mathcal M_1 \subseteq H^2(\mathbb D)\oplus \mathbb C$. 
This description is then used to identify the unitary operators that arise in the study of unitary equivalence between such invariant subspaces. 
We also show that there exist invariant subspaces of $T_{\sigma,\theta}$ which are not of this form; in particular, this is demonstrated by proving that the vector $(0,0,1)^t$ is not cyclic for $T_{\sigma,\theta}$.

In Section~\ref{sec-3}, we establish several necessary conditions for two invariant subspaces to be unitarily equivalent. 
In Section~\ref{sec-4}, we prove our main result, Theorem~\ref{main}, whose proof relies on Lemma~\ref{cor-dec}, providing structural constraints on the corresponding unitary intertwiner. 

Finally, in Section~\ref{Sec-5}, we show that a normalized $T_{\sigma,\theta}$ belongs to the class $C_{00}$, in analogy with the classical Brownian shift.

\section{Lifted Type Invariant subspaces of $T_{\sigma, \theta}$}\label{sec-2}
In this section, we present a description of invariant subspaces of $T_{\sigma,\theta}$ which we call {\it lifted type invariant subspaces} arising from the invariant subspaces of the classical Brownian shift. These spaces are  of the type
$$\mathcal M:=\begin{bmatrix}
    \mathcal M_0 \\\mathcal M_1
\end{bmatrix}\subseteq \begin{bmatrix}
   H^2(\mathbb D^2) \\ H^2(\mathbb D)\\ \mathbb C 
\end{bmatrix},\qquad \mathcal M_0 \subseteq H^2(\mathbb D^2),\, \, \mathcal M_1\subseteq  \begin{bmatrix} H^2(\mathbb D)\\ \mathbb C \end{bmatrix}.$$
Here $\mathcal M_1\subseteq H^2(\mathbb D)\oplus \mathbb C $ is an invariant subspace of $B_{\sigma, \theta}.$
As noted in the introduction, $\mathcal M_1 $ has two choices, namely Type I and Type II subspaces. 

We now present a description of $\mathcal M$.
To do this, we identify
\[
H^2(\mathbb D^2) \cong H^2\!\big(\mathbb D; H^2(\mathbb D)\big)
\]
with respect to the variable $z_1$. Under this identification, $M_{z_1}$ becomes the unilateral shift on the vector-valued Hardy space $H^2(\mathbb D; H^2(\mathbb D))$.

For an inner function $\varphi \in  H^\infty(\mathbb D),$ we have the following orthogonal decomposition
\[
H^2(\mathbb D)
=
\varphi H^2(\mathbb D)
\oplus
K_{\varphi},
\qquad
K_{\varphi }:= H^2(\mathbb D) \ominus \varphi H^2(\mathbb D).
\]
Consequently,
\[
H^2(\mathbb D^2)
=
H^2\!\big(\mathbb D; \varphi H^2(\mathbb D)\big)
\oplus
H^2\!\big(\mathbb D; K_{\varphi}\big),
\]
and the first summand coincides with $ \varphi_{z_2}\, H^2(\mathbb D^2) $.  For any one variable function $f,$ the notation $f_{z_2}$ is fixed for a function in two variable defined by $f_{z_2}(z_1,z_2):=f(z_2)$.

The following theorem presents a description of invariant subspaces of $T_{\sigma, \theta}$ of the form $\begin{bmatrix}
    \mathcal M_0\\\mathcal M_1
\end{bmatrix}.$

\begin{theorem}
    Let $\sigma>0$ and $ \theta \in [0,2\pi).$ Let $T_{\sigma, \theta}$ be a $3$-Brownian shift and $B_{\sigma, \theta}$ be the Brownian shift. Let $\mathcal M=\begin{bmatrix}
    \mathcal M_0\\\mathcal M_1
\end{bmatrix}$ be an invariant subspace of $T_{\sigma,\theta},$ where $\mathcal M_1$ is a nonzero invariant subspace of $B_{\sigma, \theta}.$ Then $\mathcal M_0$ must be one of the following forms:
    \begin{enumerate}
        \item If $
 \mathcal M_1 = \begin{bmatrix}
    \varphi H^2 \\ 0 \end{bmatrix},
$
where  $\varphi \in H^\infty(\mathbb D),$ is an inner function, then there exists a Hilbert space $\mathcal E_{\mathcal M}$ and an operator-valued inner function
$
\Psi \in H^\infty\!\big(\mathbb D; \mathcal B(\mathcal E_{\mathcal M}, K_\varphi)\big)
$
such that
$$\mathcal M_0
=
 \varphi_{z_2}\, H^2(\mathbb D^2) 
\oplus
\Psi H^2(\mathbb D; \mathcal E_{\mathcal M}).
$$

\item If $
 \mathcal M_1 = \mathbb C
\begin{bmatrix} g \\[2pt] 1 \end{bmatrix}
\oplus \bigl(\varphi H^2 \oplus \{0\}\bigr),
$
where $\varphi\in H^\infty(\mathbb D)$ is an inner function such that $\varphi(e^{i\theta})$
exists, and
\begin{equation*}
g(z)=\sigma\left(\frac{\overline{\varphi(e^{i\theta})}\,\varphi(z)-1}{z-e^{i\theta}}\right),
\qquad z\in\mathbb D,
\end{equation*}
then there exist a Hilbert space $\mathcal E_{\mathcal M}$ and an operator-valued inner function
$
\Psi \in H^\infty\!\big(\mathbb D; \mathcal B(\mathcal E_{\mathcal M}, K_\varphi)\big)
$
such that
\[
\mathcal M_0
=
 \varphi_{z_2}\, H^2(\mathbb D^2) 
\;\oplus\;
g_{z_2}H^2_{z_1}(\mathbb D)
\;\oplus\;
\Psi\,H^2(\mathbb D;\mathcal E_{\mathcal M}),
\]
with $
\Psi(0)\mathcal E_{\mathcal M} \subset g^\perp \subset K_\varphi .
$
    \end{enumerate}
\end{theorem}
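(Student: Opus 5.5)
Write $\mathcal M=\mathcal M_0\oplus\mathcal M_1$ and $X=(F,f,\alpha)\in\mathcal M$. Then
\[
T_{\sigma,\theta}X=(z_1F+\sigma Jf,\; zf+\sigma\alpha,\; e^{i\theta}\alpha).
\]
Since $\mathcal M$ is a direct sum of its components, we may take $F=0$ or $f=\alpha=0$ independently. So invariance of $\mathcal M$ is equivalent to three conditions:
\begin{itemize}
\item[(1)] $\mathcal M_1$ is $B_{\sigma,\theta}$-invariant;
\item[(2)] $M_{z_1}\mathcal M_0\subseteq\mathcal M_0$;
\item[(3)] $J(P_{H^2}\mathcal M_1)\subseteq \mathcal M_0$, where $P_{H^2}\mathcal M_1$ is the set of first coordinates $f$ of elements $(f,\alpha)\in\mathcal M_1$.
\end{itemize}
The proof then amounts to classifying the $M_{z_1}$-invariant subspaces $\mathcal M_0$ that contain the fixed set $\mathcal L:=\bigvee_{k\ge0}M_{z_1}^kJ(P_{H^2}\mathcal M_1)$. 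Note that this only shows $\mathcal M_0$ has the stated form; the statement requires nothing more.

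We identify $H^2(\mathbb D^2)\cong H^2(\mathbb D;H^2(\mathbb D))$ in the variable $z_1$, so that $J$ becomes the embedding of $H^2(\mathbb D)$ as the constants. In Type I, $P_{H^2}\mathcal M_1=\varphi H^2$, so $\mathcal L=H^2(\mathbb D;\varphi H^2)=\varphi_{z_2}H^2(\mathbb D^2)$. This space reduces $M_{z_1}$, because $M_{z_1}$ acts coordinatewise with respect to $H^2(\mathbb D)=\varphi H^2\oplus K_\varphi$. Hence $\mathcal M_0=\mathcal L\oplus(\mathcal M_0\ominus\mathcal L)$, and $\mathcal M_0\ominus\mathcal L$ is an $M_{z_1}$-invariant subspace of $H^2(\mathbb D;K_\varphi)$; it is invariant because both $\mathcal M_0$ and $\mathcal L^\perp$ are. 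By the Beurling--Lax--Halmos theorem \cite{FF90}, it equals $\Psi H^2(\mathbb D;\mathcal E_{\mathcal M})$ for some inner $\Psi\in H^\infty(\mathbb D;\mathcal B(\mathcal E_{\mathcal M},K_\varphi))$. We allow $\mathcal E_{\mathcal M}=\{0\}$ to cover the trivial case.

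In Type II, $P_{H^2}\mathcal M_1=\mathbb C g\oplus\varphi H^2$ (using $g\perp\varphi H^2$), so
\[
\mathcal L=\varphi_{z_2}H^2(\mathbb D^2)\oplus H^2(\mathbb D;\mathbb C g)=\varphi_{z_2}H^2(\mathbb D^2)\oplus g_{z_2}H^2_{z_1}(\mathbb D).
\]
As before, $\mathcal M_0=\varphi_{z_2}H^2(\mathbb D^2)\oplus\mathcal K$, where $\mathcal K\subseteq H^2(\mathbb D;K_\varphi)$ is $M_{z_1}$-invariant and contains $H^2(\mathbb D;\mathbb C g)$. The remaining task is to split off $H^2(\mathbb D;\mathbb Cg)$ inside $\mathcal K$. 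This is the main obstacle, since $H^2(\mathbb D;\mathbb Cg)$ does not reduce $\mathcal K$ in general.

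To handle it, I would write $K_\varphi=\mathbb C g\oplus g^\perp$, so that $H^2(\mathbb D;K_\varphi)=H^2(\mathbb D;\mathbb Cg)\oplus H^2(\mathbb D;g^\perp)$, and both summands reduce $M_{z_1}$ because it acts coordinatewise. Since $H^2(\mathbb D;\mathbb Cg)\subseteq\mathcal K$ and this space reduces $M_{z_1}$, we get $\mathcal K=H^2(\mathbb D;\mathbb Cg)\oplus\mathcal K'$ with $\mathcal K'=\mathcal K\cap H^2(\mathbb D;g^\perp)$, and $\mathcal K'$ is again $M_{z_1}$-invariant. Applying Beurling--Lax--Halmos in $H^2(\mathbb D;g^\perp)$ gives $\mathcal K'=\Psi H^2(\mathbb D;\mathcal E_{\mathcal M})$ with $\Psi$ taking values in $\mathcal B(\mathcal E_{\mathcal M},g^\perp)\subseteq\mathcal B(\mathcal E_{\mathcal M},K_\varphi)$. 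In particular $\Psi(0)\mathcal E_{\mathcal M}\subset g^\perp$; in fact the stronger statement that $\Psi(z)$ maps into $g^\perp$ for every $z$ holds. Collecting the three orthogonal pieces gives the stated form of $\mathcal M_0$.

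I would also check that $g\in K_\varphi$ is nonzero, so that $g_{z_2}H^2_{z_1}(\mathbb D)$ is a genuine summand. If $g=0$ this summand simply vanishes and the argument is unaffected.
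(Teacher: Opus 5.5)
Your proof is correct. For lifted Type~I it is the paper's argument: $J(\varphi H^2)\subseteq\mathcal M_0$, $M_{z_1}$-invariance gives $\varphi_{z_2}H^2(\mathbb D^2)\subseteq\mathcal M_0$, this reducing piece is split off, and Beurling--Lax--Halmos is applied in $H^2(\mathbb D;K_\varphi)$.

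For Type~II you remove $g_{z_2}H^2_{z_1}(\mathbb D)$ differently from the paper.

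The paper passes to the wandering subspace $W=\mathcal N_0\ominus z_1\mathcal N_0$ of $\mathcal N_0=\mathcal M_0\cap H^2(\mathbb D;K_\varphi)$. It notes $g_{z_2}\in W$ because $g_{z_2}$ does not depend on $z_1$, and writes $W=\mathbb C g_{z_2}\oplus W'$. The Wold decomposition then gives $\mathcal N_0=g_{z_2}H^2_{z_1}(\mathbb D)\oplus\bigoplus_{n\ge0}z_1^nW'$. Beurling--Lax--Halmos is applied to the last summand, and $\Psi(0)\mathcal E_{\mathcal M}\subset g^\perp$ is read off from $W'\perp g_{z_2}$.

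You instead observe that $H^2(\mathbb D;\mathbb C g)$ reduces $M_{z_1}$, just as $H^2(\mathbb D;\varphi H^2)$ does, so the Type~I splitting applies again. In fact you could remove $\mathcal L=H^2(\mathbb D;\varphi H^2\oplus\mathbb C g)$ in one step and treat both types uniformly.

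What each route buys:
\begin{itemize}
\item Yours avoids the Wold decomposition. It also shows at once that $\Psi$ is $\mathcal B(\mathcal E_{\mathcal M},g^\perp)$-valued on all of $\mathbb D$. The paper asserts this in its final sentence, but in its route it needs the extra remark that $W'\perp z_1^k g_{z_2}$ for $k\ge1$ (true since $W'\perp z_1\mathcal N_0$).
\item The paper's route lands directly on the condition as stated, which is a constraint on the wandering space only.
\end{itemize}

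One small correction. Your remark that $H^2(\mathbb D;\mathbb C g)$ ``does not reduce $\mathcal K$ in general'' is not right. A subspace that reduces $M_{z_1}$ and lies inside the $M_{z_1}$-invariant subspace $\mathcal K$ automatically reduces $M_{z_1}|_{\mathcal K}$, since $P_{\mathcal K}M_{z_1}^*$ maps it into itself. This is exactly what your next paragraph uses, so there is no genuine obstacle at that step.
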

\medskip
\begin{proof} Since $T_{\sigma, \theta}(\mathcal M)\subseteq \mathcal M,$ it follows that $\mathcal M_0$ must be invariant under multiplication by $z_1.$ Note that we have only two nontrivial choices for $\mathcal M_1$, which we divide into the following cases.

\noindent\textit{(i)} In this case, we take $\mathcal M_1$ to be 
$$\begin{bmatrix}
    \varphi H^2(\mathbb D)\\0
\end{bmatrix},$$
where  $\varphi \in H^\infty(\mathbb D)$ is an inner function.   From
$$T_{\sigma, \theta}\begin{bmatrix}
    0 \\\varphi f\\ 0
\end{bmatrix} \in \mathcal M=\begin{bmatrix}
    \mathcal M_0 \\\mathcal M_1
\end{bmatrix},$$
we have $\varphi_{z_2}f_{z_2} \in \mathcal M_0$ for every $f \in H^2(\mathbb D).$ Since $\mathcal M_0$ is $M_{z_1}$-invariant, we obtain
$ \varphi_{z_2}\, H^2(\mathbb D^2) \subseteq \mathcal M_0.$

To get a nice description of $\mathcal M_0,$ we will use the Beurling–Lax–Halmos theorem \cite[Theorem~2.1, p. 239]{FF90}. 
Since $ \varphi_{z_2}\, H^2(\mathbb D^2)  \subset \mathcal M_0$, it follows that $\mathcal M_0$ decomposes orthogonally as
\[
\mathcal M_0
=
 \varphi_{z_2}\, H^2(\mathbb D^2) 
\oplus
\mathcal N_0,
\qquad
\mathcal N_0 := \mathcal M_0 \cap H^2\!\big(\mathbb D; K_{\varphi}\big).
\]
The subspace $\mathcal N_0$ is closed and invariant under $M_{z_1}$, now viewed as the shift on $H^2(\mathbb D; K_\varphi)$.
By the Beurling--Lax--Halmos theorem, there exists a Hilbert space $\mathcal E_{\mathcal M}$ and an operator-valued inner function
\[
\Psi \in H^\infty\!\big(\mathbb D; \mathcal B(\mathcal E_{\mathcal M}, K_\varphi)\big)
\]
such that
\[
\mathcal N_0 = \Psi H^2(\mathbb D; \mathcal E_{\mathcal M}).
\]
Therefore,
\[
\mathcal M_0
=
 \varphi_{z_2}\, H^2(\mathbb D^2) 
\oplus
\Psi H^2(\mathbb D; \mathcal E_{\mathcal M}).
\]

\medskip
\noindent\textit{(ii)} In this case, we take the remaining choice for $\mathcal M_1$, i.e,

$$\mathcal M_1=\mathbb C\binom{g}{1}\oplus(\varphi H^2(\mathbb D)\oplus\{0\}),$$
where $\varphi \in  H^2(\mathbb D)$ is inner with $\varphi(e^{i\theta})$ defined and 
\begin{equation*}
g(z)
=
\sigma\,\cdot\frac{\varphi(e^{i\theta})\varphi(z)-1}{z-e^{i\theta}}, \quad z \in \mathbb D.
\end{equation*}
In addition to $ \varphi_{z_2}\, H^2(\mathbb D^2)  \subseteq \mathcal M_0,$ we also have $g_{z_2} \in \mathcal M_0$ in this case. Since $\mathcal M_0$ is $z_1$-invariant, we obtain $z_1^ig_{z_2} \in \mathcal M_0, \, i \geq 0$. Thus $g_{z_2}H_{z_1}^2(\mathbb D) \subseteq \mathcal{M}_0.$ 
Note that we have the orthogonal decomposition
\[
H^2(\mathbb D^2)
=
 \varphi_{z_2}\, H^2(\mathbb D^2) 
\;\oplus\;
H^2(\mathbb D;K_\varphi).
\]
Since the first summand above already lies in $\mathcal M_0$, it follows that
\[
\mathcal M_0
=
 \varphi_{z_2}\, H^2(\mathbb D^2) 
\;\oplus\;
\mathcal N_0,
\qquad
\mathcal N_0 := \mathcal M_0\cap H^2(\mathbb D;K_\varphi),
\]
where $\mathcal N_0$ is a closed $M_{z_1}$-invariant subspace of
$H^2(\mathbb D;K_\varphi)$.
We also know that $g \perp \varphi H^2(\mathbb D),$ which further implies that $g_{z_2} \perp  \varphi_{z_2}\, H^2(\mathbb D^2),$ and thus  $g_{z_2}\in \mathcal N_0$.

\medskip

Consider the wandering subspace of $\mathcal N_0$ with respect to $M_{z_1}$,
\[
W := \mathcal N_0 \ominus z_1 \mathcal N_0 .
\]
By the Wold decomposition for the unilateral shift,
\beq \label{N_0}
\mathcal N_0=\bigoplus_{n\ge 0} z_1^n W .
\eeq
Since $g_{z_2}$ is independent of $z_1$, we necessarily have
$g_{z_2}\perp z_1\mathcal N_0$, hence $g_{z_2}\in W$.
Consequently,
\[
W=\operatorname{span}\{g_{z_2}\}\oplus W',
\qquad
W':=W\ominus \operatorname{span}\{g_{z_2}\}.
\]
From \eqref{N_0}, we have
\[
\mathcal N_0
=
g_{z_2}\,H_{z_1}^2(\mathbb D)
\;\oplus\;
\Big(\bigoplus_{n\ge0} z_1^n W'\Big).
\]
Here $g_{z_2}\,H_{z_1}^2(\mathbb D):=\overline{\operatorname{span}}\{z_1^i\,g_{z_2}; \,i\geq 0\} \subseteq H^2(\mathbb D^2).$

The remaining summand
\[
\mathcal N_0' := \bigoplus_{n\ge0} z_1^n W'
\subset H^2(\mathbb D;K_\varphi)
\]
is again a closed $M_{z_1}$-invariant subspace.
Hence, again by the Beurling--Lax--Halmos theorem, there exist a Hilbert space $\mathcal E_{\mathcal M}$ and an
operator-valued inner function
\[
\Psi \in
H^\infty\!\left(\mathbb D;
\mathcal B(\mathcal E_{\mathcal{M}},K_\varphi)\right)
\]
such that
\[
\mathcal N_0'=\Psi\,H^2(\mathbb D;\mathcal E_{\mathcal{M}}).
\]
Since $W'\perp g$, the wandering space satisfies
$
\Psi(0)\mathcal E_{\mathcal M} \subset g^\perp \subset K_\varphi .
$

Combining the above decompositions, we conclude that every closed
$M_{z_1}$-invariant subspace $N$ that satisfies the stated assumptions
admits the representation
\[
N
=
\varphi_{z_2}\, H^2(\mathbb D^2)
\;\oplus\;
g_{z_2}\, H^2_{z_1}(\mathbb D)
\;\oplus\;
\Psi\,H^2(\mathbb D;\mathcal E_{\mathcal M}),
\]
where $\Psi$ is an inner multiplier taking values in $K_\varphi \ominus \operatorname{span}\{g\}$.
\end{proof}

\begin{remark}
It follows that the minimal lifted Type~I invariant subspace of $T_{\sigma,\theta}$ for which 
$\varphi H^2(\mathbb D)\oplus\{0\},$ is invariant under $B_{\sigma,\theta},$ is given by
\[
\mathcal M^{\mathrm{I}}_\varphi 
= 
 \varphi_{z_2}\, H^2(\mathbb D^2)  
\oplus 
\bigl(\varphi H^2(\mathbb D)\oplus\{0\}\bigr).
\]

Similarly, the minimal lifted Type~II invariant subspace for which 
\[
\mathbb C\begin{bmatrix} g \\ 1 \end{bmatrix}
\oplus
\bigl(\varphi H^2(\mathbb D)\oplus\{0\}\bigr)
\]
is invariant under $B_{\sigma,\theta},$ is
\[
\mathcal M^{\mathrm{II}}_\varphi
:=
N_{\min}
\oplus
\Bigl(
\mathbb C\begin{bmatrix} g \\ 1 \end{bmatrix}
\oplus
\bigl(\varphi H^2(\mathbb D)\oplus\{0\}\bigr)
\Bigr),
\]
where
\[
N_{\min}
=
 \varphi_{z_2}\, H^2(\mathbb D^2) 
\;\oplus\;
g_{z_2}\, H^2_{z_1}(\mathbb D).
\]
\end{remark}

We note that characterizing invariant subspaces for  $T_{\sigma,\theta}$ appears to be a subtle problem. 
Indeed, there are invariant subspaces as cyclic subspaces of the form
\[
\mathscr C_x:=\overline{\operatorname{span}}\{T_{\sigma,\theta}^n x : n \ge 0\},
\qquad x \in H^2(\mathbb D^2)\oplus H^2(\mathbb D)\oplus \mathbb C.
\]
It can be seen from the different choices of $x$ that the invariant subspace  $\mathscr C_x$ does not agree with any of the defined types. For example, consider the vector $x=(0,0,1)^t$. For the Brownian shift $B_{\sigma,\theta}$ 
the vector $(0,1)^t$ is cyclic, that is,
\[
\overline{\operatorname{span}}\{B_{\sigma,\theta}^n(0,1)^t : n \ge 0\}
= H^2(\mathbb D)\oplus \mathbb C .
\]
However, this is not true for $T_{\sigma, \theta},$ if we take the orbit of the vector $(0,0,1)^t$ under $T_{\sigma, \theta}.$  The following proposition illustrates that the structure of invariant subspaces generated by single 
vectors can be quite complicated. For this reason, we restrict our attention to tractable classes of invariant subspaces of $T_{\sigma,\theta}$.

\begin{proposition}[Non-cyclicity of the vector $(0,0,1)^t$]
Let $
T_{\sigma,\theta}$ be a $3$-Brownian shift on
$
\mathcal H
=
H^2(\mathbb D^2)\oplus H^2(\mathbb D)\oplus\mathbb C,
$
where $\sigma> 0, \theta \in [0,2\pi)$. Let $e_3=(0,0,1)^t$. Then $e_3$ is not cyclic for
$T_{\sigma,\theta}$, that is,
\[
\mathcal M:=\overline{\operatorname{span}}\{T_{\sigma,\theta}^n e_3:\ n\ge 0\}\neq
\mathcal H.
\]
\end{proposition}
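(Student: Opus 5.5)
The plan is to compute the orbit $T_{\sigma,\theta}^n e_3$ explicitly and then exhibit a nonzero vector orthogonal to every orbit element. That vector must lie in $\mathcal M^\perp$, so $\mathcal M\neq\mathcal H$.

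\emph{Step 1: the orbit.} Write $\lambda=e^{i\theta}$ and $T_{\sigma,\theta}^n e_3=(F_n,f_n,\lambda^n)^t$ with $F_n\in H^2(\mathbb D^2)$ and $f_n\in H^2(\mathbb D)$. The block form of $T_{\sigma,\theta}$ gives the recursions
\[
F_{n+1}=z_1F_n+\sigma Jf_n,\qquad f_{n+1}=zf_n+\sigma\lambda^n,
\]
with $F_0=0$ and $f_0=0$. Solving the second recursion gives
\[
f_n(z)=\sigma\sum_{k+l=n-1}z^k\lambda^l .
\]
Feeding this into the first recursion and inducting on $n$ gives
\[
F_n(z_1,z_2)=\sigma^2\sum_{j+k+l=n-2} z_1^j z_2^k\lambda^l \qquad (n\ge 2),
\]
with $F_0=F_1=0$. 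This is the complete homogeneous symmetric polynomial of degree $n-2$ evaluated at $(z_1,z_2,\lambda)$, times $\sigma^2$.

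\emph{Step 2: symmetry and the orthogonal vector.} The key observation is that every $F_n$ is symmetric under the swap $z_1\leftrightarrow z_2$. Consequently, any antisymmetric element of $H^2(\mathbb D^2)$ is orthogonal to every $F_n$. The simplest choice is
\[
v=(z_1-z_2,\,0,\,0)^t .
\]
Because the second and third components of $v$ vanish, we have $\langle T_{\sigma,\theta}^n e_3, v\rangle=\langle F_n, z_1-z_2\rangle$. This pairing equals the coefficient of $z_1$ in $F_n$ minus the coefficient of $z_2$ in $F_n$. For $n\ge3$ both coefficients equal $\sigma^2\lambda^{n-3}$. For $n\le 2$ both coefficients are zero. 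In every case the difference is zero, so $v\perp T_{\sigma,\theta}^n e_3$ for all $n\ge0$.

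By continuity of the inner product, $v$ is then orthogonal to the closed span $\mathcal M$. Since $v\neq 0$, this gives $\mathcal M\neq\mathcal H$.

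The only step requiring care is the induction in Step 1. Its essence is that multiplying by $z_1$ and adding $\sigma J f_n$, which is a function of $z_2$ alone, reproduces the next complete homogeneous polynomial. Explicitly, $h_m(z_1,z_2,\lambda)=z_1h_{m-1}(z_1,z_2,\lambda)+h_m(z_2,\lambda)$. This identity is the standard recursion for complete homogeneous symmetric polynomials, so I expect no real obstacle here.

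Alternatively, one can avoid the closed formula altogether. It suffices to prove by induction the weaker statement that each $F_n$ is a polynomial symmetric in $(z_1,z_2)$. However, the recursion alone does not transparently preserve symmetry, so the explicit formula is the cleaner route.
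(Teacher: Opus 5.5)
Your proof is correct and follows the same route as the paper. The paper also computes the orbit by induction; its formula $\sigma^2\sum_{k} e^{ik\theta}h_{n-2-k}(z_1,z_2)$ is exactly your $\sigma^2 h_{n-2}(z_1,z_2,\lambda)$. It then concludes from the $z_1\leftrightarrow z_2$ symmetry of the first coordinates. The only difference is in the last step: the paper argues that symmetry survives linear combinations and limits, while you exhibit the explicit annihilating vector $(z_1-z_2,0,0)^t$, which finishes slightly more cleanly.
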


\begin{proof}
We first show that  for every $n\ge 0$,
\begin{equation}\label{T-n}
T_{\sigma,\theta}^n e_3
=
\Bigl(
\sigma^2 \sum_{k=0}^{n-2} e^{ik\theta}\, h_{n-2-k}(z_1,z_2),
\;
\sigma \sum_{k=0}^{n-1} e^{ik\theta} z^{\,n-1-k},
\;
e^{in\theta}
\Bigr)^t,
\end{equation}
where 
\[
h_m(z_1,z_2)
=
\sum_{\substack{a,b\ge 0\\ a+b=m}} z_1^{a} z_2^{b},
\qquad m\in\mathbb Z,
\]
with the convention that $h_m\equiv 0$ for $m<0$.
 We argue by induction on $n$.

For $n=0$ is trivial and for $n=1$, we have
\[
T_{\sigma,\theta}e_3=(0,\sigma,e^{i\theta})^t,
\]
which agrees with \eqref{T-n}.

Assume that the formula holds for some $n\ge1$, i.e.
\[
T_{\sigma,\theta}^n e_3=(F_n,f_n,e^{in\theta})^t,
\]
with
\[
F_n=\sigma^2\sum_{k=0}^{n-2} e^{ik\theta}\, h_{n-2-k},
\qquad
f_n=\sigma\sum_{k=0}^{n-1} e^{ik\theta} z^{n-1-k}.
\]
Applying $T_{\sigma,\theta}$ yields
\[
T_{\sigma,\theta}^{n+1} e_3
=
\bigl(z_1F_n+\sigma f_n(z_2),\; zf_n+\sigma e^{in\theta},\; e^{i(n+1)\theta}\bigr)^t.
\]
Note that the third coordinate matches with the induction step. For the middle coordinate,
\[
zf_n+\sigma e^{in\theta}
=
\sigma\sum_{k=0}^{n-1} e^{ik\theta} z^{n-k}
+\sigma e^{in\theta}
=
\sigma\sum_{k=0}^{n} e^{ik\theta} z^{n-k}.
\]
For the first coordinate, we compute
\[
\begin{aligned}
z_1F_n+\sigma f_n(z_2)
&=
\sigma^2\sum_{k=0}^{n-2} e^{ik\theta} z_1 h_{n-2-k}
+\sigma^2\sum_{k=0}^{n-1} e^{ik\theta} z_2^{\,n-1-k}.
\end{aligned}
\]
Using the identity $h_{m+1}=z_1h_m+z_2^{m+1}$ for $m\ge0$, we obtain
\[
z_1F_n+\sigma f_n(z_2)
=
\sigma^2\sum_{k=0}^{n-1} e^{ik\theta} h_{n-1-k}.
\]
This establishes the claimed formula for $n+1$ and completes the induction.

Now since a finite linear combination of symmetric polynomials is symmetric, also the pointwise limit of symmetric polynomials is symmetric, it follows that the first coordinate of vectors in $\mathcal M$ is always symmetric in $z_1,z_2.$ Hence, $\mathcal M\neq \mathcal H.$
\end{proof}

\section{Auxillary Results} \label{sec-3}

To investigate the unitary equivalence problem  for $3$-Brownian shifts $T_{\sigma,\theta},$ when restricted to lifted Type I and lifted Type II invariant subspaces, we first compute the operator norm of $T_{\sigma,\theta}$ restricted to these invariant subspaces. Since the operator norm is preserved under unitary equivalence, it provides a useful criterion for distinguishing non-unitarily equivalence.
\begin{lemma}\label{OPOS}
Let $T_{\sigma,\theta}$ be the $3$--Brownian shift acting on
$
\mathcal H
=
H^2(\mathbb D^2)\oplus H^2(\mathbb D)\oplus \mathbb C
$
by
\[
T_{\sigma,\theta}(F,f,\alpha)^t
=
\bigl(z_1F+\sigma f_{z_2},\; zf+\sigma\alpha,\; e^{i\theta}\alpha\bigr)^t,
\qquad \sigma>0.
\]
Let $\mathcal M$ be one of lifted Type I or lifted Type II invariant subspace of $T_{\sigma,\theta}.$
Then
$
\bigl\|T_{\sigma,\theta}\big|_{\mathcal M}\bigr\|
=
\sqrt{1+\sigma^2}.
$
\end{lemma}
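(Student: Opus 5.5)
The plan is to compute $\|T_{\sigma,\theta}x\|^2$ exactly for an arbitrary vector $x=(F,f,\alpha)^t\in\mathcal H$. This gives the upper bound on all of $\mathcal H$. Then I will exhibit a vector of $\mathcal M$ at which the bound is attained.

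\emph{Step 1 (exact norm identity).} For $x=(F,f,\alpha)^t$ we have
\[
\|T_{\sigma,\theta}x\|^2=\|z_1F+\sigma f_{z_2}\|^2+\|zf+\sigma\alpha\|^2+|\alpha|^2 .
\]
I will use two orthogonality relations to remove the cross terms.
\begin{itemize}
\item In $H^2(\mathbb D^2)$, every monomial of $z_1F$ contains a positive power of $z_1$. The function $f_{z_2}$ depends only on $z_2$. Hence $z_1F\perp f_{z_2}$.
\item In $H^2(\mathbb D)$, $zf$ vanishes at $0$, so it is orthogonal to the constant $\sigma\alpha$.
\end{itemize}
Next, $M_{z_1}$, $S$ and $J$ are isometries, so $\|z_1F\|=\|F\|$, $\|zf\|=\|f\|$ and $\|f_{z_2}\|=\|f\|$. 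Combining these facts gives
\[
\|T_{\sigma,\theta}x\|^2=\|F\|^2+(1+\sigma^2)\bigl(\|f\|^2+|\alpha|^2\bigr)\le(1+\sigma^2)\|x\|^2 .
\]
Therefore $\|T_{\sigma,\theta}\|\le\sqrt{1+\sigma^2}$ on all of $\mathcal H$, and in particular $\bigl\|T_{\sigma,\theta}|_{\mathcal M}\bigr\|\le\sqrt{1+\sigma^2}$.

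\emph{Step 2 (attaining the bound).} The identity shows that equality holds exactly when $F=0$. So it suffices to find a nonzero vector of $\mathcal M$ with vanishing first coordinate. Since $\mathcal M=\mathcal M_0\oplus\mathcal M_1$ is a direct sum of the component spaces, the vector $(0,f,\alpha)^t$ lies in $\mathcal M$ for every $(f,\alpha)\in\mathcal M_1$. In both the lifted Type I and lifted Type II cases, $\mathcal M_1$ contains $(\varphi,0)^t\neq 0$. Taking $x=(0,\varphi,0)^t$ gives $\|T_{\sigma,\theta}x\|^2=(1+\sigma^2)\|x\|^2$, so the norm equals $\sqrt{1+\sigma^2}$.

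\emph{Main obstacle.} The only point that needs care is the orthogonality $z_1F\perp f_{z_2}$ in Step 1. It follows by comparing Taylor coefficients: $z_1F$ has no monomials of the form $z_1^0z_2^n$, which are the only monomials in $f_{z_2}$. The remaining steps are routine.
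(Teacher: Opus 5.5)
Your proof is correct and follows the paper's argument. The paper uses the same exact identity $\|T_{\sigma,\theta}(F,f,\alpha)^t\|^2=\|F\|^2+(1+\sigma^2)\bigl(\|f\|^2+|\alpha|^2\bigr)$ for the upper bound, and your explicit witness $(0,\varphi,0)^t\in\mathcal M$ simply makes precise the paper's remark that the bound is attained on vectors of $\mathcal M$ supported in the second and third coordinates.
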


\begin{proof}
We first consider the action of $T_{\sigma,\theta}$ on the full space $\mathcal H$.
In $H^2(\mathbb D^2)$, the subspaces $z_1H^2(\mathbb D^2)$ and
$\{g(z_2):g\in H^2(\mathbb D)\}$ are orthogonal, and multiplication by $z_1$
and by $z$ act isometrically on $H^2(\mathbb D^2)$ and $H^2(\mathbb D)$,
respectively. Consequently, for $(F,f,\alpha)^t\in\mathcal H$,
\[
\|T_{\sigma,\theta}(F,f,\alpha)^t\|^2
=
\|F\|^2+(\sigma^2+1)\|f\|^2+(\sigma^2+1)|\alpha|^2
\le
(1+\sigma^2)\|(F,f,\alpha)\|^2.
\]
Thus $\|T_{\sigma,\theta}\|\le\sqrt{1+\sigma^2}.$
Since the equality is attained if we take vectors supported in the second or third
coordinates, we obtain
$
\|T_{\sigma,\theta}\big|_{\mathcal M}\|=\sqrt{1+\sigma^2}.
$
This completes the proof.
\end{proof}
The following lemma provides an immediate necessary condition for the unitary equivalence of invariant subspaces of $3$-Brownian shifts.

\begin{lemma} \label{eq-sig}
Let $\mathcal M_1$ and $M_2$ be lifted Type I or lifted Type II invariant subspaces of $T_{\sigma_1,\theta_1}$ and $T_{\sigma_2,\theta_2}$, respectively.
Assume that $T_{\sigma_1,\theta_1}\big|_{\mathcal \mathcal M_1} \cong T_{\sigma_2,\theta_2}\big|_{\mathcal M_2}.$
Then $\sigma_1 = \sigma_2$.
\end{lemma}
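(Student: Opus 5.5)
The plan is to use the fact that the operator norm is a unitary invariant, combined with Lemma~\ref{OPOS}.

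Suppose $U:\mathcal M_1\to\mathcal M_2$ is a unitary with $U\,T_{\sigma_1,\theta_1}|_{\mathcal M_1}=T_{\sigma_2,\theta_2}|_{\mathcal M_2}\,U$. Then
\[
T_{\sigma_2,\theta_2}|_{\mathcal M_2}=U\,T_{\sigma_1,\theta_1}|_{\mathcal M_1}\,U^*.
\]
Since $U$ and $U^*$ are isometric, it follows that
\[
\|T_{\sigma_1,\theta_1}|_{\mathcal M_1}\|=\|T_{\sigma_2,\theta_2}|_{\mathcal M_2}\|.
\]
By Lemma~\ref{OPOS}, applied to each lifted Type~I or Type~II subspace, the two norms equal $\sqrt{1+\sigma_1^2}$ and $\sqrt{1+\sigma_2^2}$ respectively. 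Because $\sigma_j>0$, the map $\sigma\mapsto\sqrt{1+\sigma^2}$ is injective on $(0,\infty)$, and so $\sigma_1=\sigma_2$.

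The only point needing care is that Lemma~\ref{OPOS} really gives the norm of the restriction, not merely of $T_{\sigma,\theta}$ on all of $\mathcal H$. The upper bound $\|T_{\sigma,\theta}|_{\mathcal M}\|\le\sqrt{1+\sigma^2}$ is inherited from $\mathcal H$. For the lower bound we need a vector in $\mathcal M$ that attains it.

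The identity $\|T(F,f,\alpha)\|^2=\|F\|^2+(1+\sigma^2)(\|f\|^2+|\alpha|^2)$, with no cross terms, is not valid in general, since $zf$ and $\sigma\alpha$ need not be orthogonal. So I would instead use a vector of the form $(F,f,0)$ with $f=\varphi h\neq 0$, which always exists in $\mathcal M$ because $\mathcal M_1\supseteq\varphi H^2(\mathbb D)\oplus\{0\}$ in both types. For such a vector, $z_1F\perp f_{z_2}$, and hence
\[
\|T(F,f,0)\|^2=\|F\|^2+(1+\sigma^2)\|f\|^2 .
\]
Taking $F=0$ is allowed only if $(0,\varphi h,0)\in\mathcal M$, which holds since $\mathcal M=\mathcal M_0\oplus\mathcal M_1$ is a direct sum. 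With this choice the norm $\sqrt{1+\sigma^2}$ is attained, so the lemma holds, and the argument above goes through.
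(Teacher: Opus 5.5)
Your argument is correct and is the paper's own: the norm is a unitary invariant, and Lemma~\ref{OPOS} gives $\|T_{\sigma,\theta}|_{\mathcal M}\|=\sqrt{1+\sigma^2}$, with the norm attained on vectors such as $(0,\varphi h,0)^t\in\mathcal M$. One correction: your worry about cross terms is unfounded, since $zf\in zH^2(\mathbb D)$ is always orthogonal to the constant $\sigma\alpha$, so $\|T_{\sigma,\theta}(F,f,\alpha)^t\|^2=\|F\|^2+(1+\sigma^2)(\|f\|^2+|\alpha|^2)$ holds for every vector, and this identity is exactly what gives the upper bound you say is inherited from $\mathcal H$.
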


\begin{proof}
The conclusion follows directly from Lemma~\ref{OPOS}.
\end{proof}
We now record the following elementary fact for a unitary equivalent shift-invariant subspaces of a vector-valued Hardy space (see the proof of \cite[Theorem 3.1(i)]{DDS25II}). We add the proof for the sake of completeness.
\begin{proposition}\label{shift-E} 
Let $\psi_i \in H^{\infty}(\mathbb D;B(\mathcal E_i,E))$, $i=1,2$, be operator–valued inner functions. Then $M_z$-invariant subspaces 
$
\psi_1H^2(\mathbb D; \mathcal E_1) $ and $ \psi_2H^2(\mathbb D; \mathcal E_2)
$
are unitarily equivalent if and only if 
$
\dim \mathcal E_1=\dim \mathcal E_2.
$
\end{proposition}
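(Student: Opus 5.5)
The plan is to reduce both directions to the classical Wold/Beurling–Lax–Halmos picture. The restriction of $M_z$ to $\psi_iH^2(\mathbb D;\mathcal E_i)$ is a pure isometry whose wandering subspace is $\psi_i\mathcal E_i$, and this wandering subspace has the same dimension as $\mathcal E_i$. So the whole question becomes: a pure isometry is determined up to unitary equivalence by the dimension of its wandering subspace (its multiplicity).

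For sufficiency, assume $\dim\mathcal E_1=\dim\mathcal E_2$ and fix a unitary $W:\mathcal E_1\to\mathcal E_2$. Since $\psi_i$ is inner, $M_{\psi_i}:H^2(\mathbb D;\mathcal E_i)\to H^2(\mathbb D;E)$ is an isometry with range $\psi_iH^2(\mathbb D;\mathcal E_i)$. It intertwines the shifts, $M_{\psi_i}M_z=M_zM_{\psi_i}$. Denote by $I\otimes W$ the unitary $H^2(\mathbb D;\mathcal E_1)\to H^2(\mathbb D;\mathcal E_2)$ acting coefficientwise by $W$; it commutes with $M_z$. We then define
\[
U:=M_{\psi_2}(I\otimes W)M_{\psi_1}^{*}\big|_{\psi_1H^2(\mathbb D;\mathcal E_1)}.
\]
This $U$ is a unitary from $\psi_1H^2(\mathbb D;\mathcal E_1)$ onto $\psi_2H^2(\mathbb D;\mathcal E_2)$. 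It satisfies $U M_z=M_zU$ on $\psi_1H^2(\mathbb D;\mathcal E_1)$, because each factor intertwines the shifts and $M_{\psi_1}^*$ restricted to the range inverts $M_{\psi_1}$.

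For necessity, suppose $U:\psi_1H^2(\mathbb D;\mathcal E_1)\to\psi_2H^2(\mathbb D;\mathcal E_2)$ is unitary with $UM_z=M_zU$. Write $\mathcal K_i=\psi_iH^2(\mathbb D;\mathcal E_i)$. Then $U(z\mathcal K_1)=z\mathcal K_2$, so $U$ maps the wandering subspace $\mathcal K_1\ominus z\mathcal K_1$ unitarily onto $\mathcal K_2\ominus z\mathcal K_2$. It remains to identify $\mathcal K_i\ominus z\mathcal K_i=\psi_i\mathcal E_i$, where $\mathcal E_i$ is viewed as the constants. This follows by transporting the decomposition $H^2(\mathbb D;\mathcal E_i)=\mathcal E_i\oplus zH^2(\mathbb D;\mathcal E_i)$ through the isometry $M_{\psi_i}$. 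Since $M_{\psi_i}$ is isometric, $\dim\psi_i\mathcal E_i=\dim\mathcal E_i$, and hence $\dim\mathcal E_1=\dim\mathcal E_2$.

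No step is a real obstacle. The only point needing care is the identification of the wandering subspace through $M_{\psi_i}$, which uses that $M_{\psi_i}$ commutes with $M_z$ and is isometric (inner). Cardinal dimensions are handled uniformly, since all spaces are separable.
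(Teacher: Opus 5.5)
Your proof is correct and follows essentially the same route as the paper: necessity via the wandering subspace $\mathcal K_i\ominus z\mathcal K_i=\psi_i\mathcal E_i$, and sufficiency via a unitary $W:\mathcal E_1\to\mathcal E_2$. Your operator $M_{\psi_2}(I\otimes W)M_{\psi_1}^*\big|_{\psi_1H^2(\mathbb D;\mathcal E_1)}$ is exactly the paper's Wold-decomposition map $\sum_n z^n\psi_1e_n\mapsto\sum_n z^n\psi_2We_n$, written as a composition of unitaries so that well-definedness and unitarity are immediate.
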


\begin{proof}
Let $N_i:=\psi_iH^2(\mathbb D;\mathcal E_i)\subset H^2(\mathbb D;E)$ for $i=1,2$. Since $\psi_i$ is inner, the multiplication operator 
$M_{\psi_i}:H^2(\mathbb D;\mathcal E_i)\to H^2(\mathbb D;E)$ is an isometry, and $N_i$ is invariant under $M_z$.

First, note that
$
N_i \ominus zN_i
= \psi_i\big(H^2(\mathbb D;\mathcal E_i)\ominus zH^2(\mathbb D;\mathcal E_i)\big).
$
Since $H^2(\mathbb D;\mathcal E_i)\ominus zH^2(\mathbb D;\mathcal E_i)$ consists of constant functions, we obtain
$
N_i \ominus zN_i = \psi_i \mathcal E_i,
$
and therefore
\[
\dim(N_i \ominus zN_i)=\dim \mathcal E_i.
\]

\medskip

\noindent
$(\Rightarrow)$ Suppose that $ N_1$ and $N_2$ are unitarily equivalent with respect to $M_z$. Then there exists a unitary operator 
$U:\mathcal N_1\to N_2$ such that
$
UM_z|_{ N_1}=M_z|_{N_2}U.
$
Hence $U(z N_1)=zN_2$, and consequently
$
U(N_1\ominus z N_1)=N_2\ominus zN_2.
$
Thus
\[
\dim \mathcal E_1
= \dim( N_1\ominus z N_1)
= \dim(N_2\ominus zN_2)
= \dim \mathcal E_2.
\]

\medskip

\noindent
$(\Leftarrow)$ Conversely, assume $\dim \mathcal E_1=\dim \mathcal E_2$. Then there exists a unitary $V:\mathcal E_1\to\mathcal E_2$.  
Using the orthogonal Wold decompositions
\[
N_i=\bigoplus_{n\ge0} z^n (N_i\ominus zN_i)
   =\bigoplus_{n\ge0} z^n \psi_i \mathcal E_i,
\]
define $U:N_1\to N_2$ on the dense algebraic sum by
\[
U\!\left(\sum_{n\ge0} z^n \psi_1 e_n\right)
=
\sum_{n\ge0} z^n \psi_2 (Ve_n),
\qquad e_n\in\mathcal E_1 .
\]
Since the summands $z^n\psi_i\mathcal E_i$ are mutually orthogonal and 
$\psi_i$ acts isometrically on $\mathcal E_i$, the map $U$ is a well-defined unitary operator. 
By construction, $UM_z=M_zU$, so $ N_1$ and $N_2$ are unitarily equivalent with respect to $M_z$.
\end{proof}

\subsection*{Unitary equivalence and compressions}

In the next section, we repeatedly pass from an operator to its compression to a
closed subspace. Since much of our analysis relies on unitary equivalence,
it is useful to record the following straightforward but important permanence
property: unitary equivalence is preserved under compression, provided the
unitary operator maps the relevant subspaces onto one another.

\begin{proposition}\label{prop:compression-intertwining}
Let $\mathcal H_1,\mathcal H_2$ be Hilbert spaces, let $T_1\in\mathcal B(\mathcal H_1)$ and
$T_2\in\mathcal B(\mathcal H_2)$, and let $U:\mathcal H_1\to\mathcal H_2$ be a unitary operator such that
\[
UT_1=T_2U.
\]
Let $\mathcal K_1\subset \mathcal H_1$ and $\mathcal K_2\subset \mathcal H_2$ be closed subspaces with
$U(\mathcal K_1)=\mathcal K_2$. Define the compressions
\[
B_1:=P_{\mathcal K_1}T_1\big|_{\mathcal K_1}\in\mathcal B(\mathcal K_1),
\qquad
B_2:=P_{\mathcal K_2}T_2\big|_{\mathcal K_2}\in\mathcal B(\mathcal K_2).
\]
Then the restriction $U|_{\mathcal K_1}:\mathcal K_1\to\mathcal K_2$ is unitary and intertwines the
compressions, namely
\[
\bigl(U|_{\mathcal K_1}\bigr)\,B_1
=
B_2\,\bigl(U|_{\mathcal K_1}\bigr).
\]
\end{proposition}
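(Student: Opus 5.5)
The proof is a direct verification. Since $U(\mathcal K_1)=\mathcal K_2$ and $U$ is unitary, the restriction $U|_{\mathcal K_1}$ is an isometry from $\mathcal K_1$ onto $\mathcal K_2$, hence a unitary between these spaces. So the only work is the intertwining identity for the compressions.

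The key step is to show that $U$ intertwines the orthogonal projections:
\[
UP_{\mathcal K_1}=P_{\mathcal K_2}U .
\]
Since $U$ is unitary, it preserves inner products. It therefore maps $\mathcal K_1^\perp$ into $\mathcal K_2^\perp$: if $y\perp\mathcal K_1$ and $k\in\mathcal K_1$, then $\langle Uy,Uk\rangle=\langle y,k\rangle=0$, and $U\mathcal K_1=\mathcal K_2$. Applying the same argument to $U^*=U^{-1}$, which maps $\mathcal K_2$ onto $\mathcal K_1$, shows that in fact $U(\mathcal K_1^\perp)=\mathcal K_2^\perp$.

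Now write $x=P_{\mathcal K_1}x+(I-P_{\mathcal K_1})x$. Then $Ux=UP_{\mathcal K_1}x+U(I-P_{\mathcal K_1})x$, where the first term lies in $\mathcal K_2$ and the second in $\mathcal K_2^\perp$. By uniqueness of the orthogonal decomposition, $P_{\mathcal K_2}Ux=UP_{\mathcal K_1}x$. An equivalent formulation is $P_{\mathcal K_2}=UP_{\mathcal K_1}U^*$.

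To conclude, take $k\in\mathcal K_1$ and compute
\[
U B_1 k=UP_{\mathcal K_1}T_1k=P_{\mathcal K_2}UT_1k=P_{\mathcal K_2}T_2Uk=B_2Uk ,
\]
where the last equality uses $Uk\in\mathcal K_2$. The only point needing any care is the projection identity, and it is elementary. Invariance of $\mathcal K_i$ is not needed, only the condition $U(\mathcal K_1)=\mathcal K_2$.
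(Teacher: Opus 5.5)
Your proof is correct and follows the paper's argument. The paper also uses $UP_{\mathcal K_1}=P_{\mathcal K_2}U$ and the identical chain $UB_1k=UP_{\mathcal K_1}T_1k=P_{\mathcal K_2}UT_1k=P_{\mathcal K_2}T_2Uk=B_2Uk$; the only difference is that you justify the projection identity via $U(\mathcal K_1^\perp)=\mathcal K_2^\perp$, while the paper simply asserts it.
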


\begin{proof}
Since $U(\mathcal K_1)=\mathcal K_2$, one has
\begin{equation}\label{eq:proj-conj}
UP_{\mathcal K_1}=P_{\mathcal K_2}U.
\end{equation}
For $x\in\mathcal K_1$, using \eqref{eq:proj-conj} and $UT_1=T_2U,$ we compute
\[
U B_1 x
=
U P_{\mathcal K_1}T_1x
=
P_{\mathcal K_2}UT_1x
=
P_{\mathcal K_2}T_2Ux
=
B_2Ux,
\]
where the last equality uses $Ux\in\mathcal K_2$. This proves the desired intertwining relation on $\mathcal K_1$.
\end{proof}

\section{Unitary Equivalence}\label{sec-4}

In this section, we prove our main result, Theorem~\ref{main}. 

For convenience, we fix the following notation for this section. Let  $\mathcal M =\begin{bmatrix}
     \mathcal M_0 \\ \mathcal M_1
 \end{bmatrix},\,  \mathcal N =\begin{bmatrix}
     \mathcal N_0 \\ \mathcal N_1
 \end{bmatrix},$ be invariant subspace of $3$-Brownian shit of lifted type.
\begin{enumerate}
\item If  $\mathcal M,  \mathcal N,$  are of lifted Type I,  then
 \[
\begin{aligned}
\mathcal M_0 &= \varphi_{z_2}^{(1)}H^2(\mathbb D^2)
\;\oplus\;
\Psi^{(1)} H^2(\mathbb D;\mathcal E_{\mathcal M}),\\
\mathcal N_0 &= \varphi_{z_2}^{(2)}H^2(\mathbb D^2)
\;\oplus\;
\Psi^{(2)} H^2(\mathbb D;\mathcal E_{\mathcal N}),
\end{aligned}
\]
and
\[
\begin{aligned}
\mathcal M_1 &= 
\varphi^{(1)}H^2(\mathbb D)\oplus\{0\},\\
\mathcal N_1 &= 
\varphi^{(2)}H^2(\mathbb D)\oplus\{0\},
\end{aligned}\]
for some inner function $\varphi^{(j)}\in H^\infty(\mathbb D)$ and  for some operator-valued inner functions $
\Psi^{(j)} \in H^\infty\!\big(\mathbb D;\mathcal B(\mathcal E_{\mathcal M}, K_{\varphi^{(j)}})\big).
$
    \item If $\mathcal M,  \mathcal N,$  are of lifted Type II, then
\[
\begin{aligned}
\mathcal M_0 &= \varphi_{z_2}^{(1)}H^2(\mathbb D^2)
\;\oplus\;
g_{z_2}^{(1)} H^2_{z_1}(\mathbb D)
\;\oplus\;
\Psi^{(1)} H^2(\mathbb D;\mathcal E_{\mathcal M}),\\
\mathcal N_0 &= \varphi_{z_2}^{(2)}H^2(\mathbb D^2)
\;\oplus\;
g_{z_2}^{(2)} H^2_{z_1}(\mathbb D)
\;\oplus\;
\Psi^{(2)} H^2(\mathbb D;\mathcal E_{\mathcal N}),
\end{aligned}
\]
and
\[
\begin{aligned}
\mathcal M_1 &= \mathbb C
\begin{bmatrix}
g^{(1)}\\
1
\end{bmatrix}
\oplus
\bigl(\varphi^{(1)}H^2(\mathbb D)\oplus\{0\}\bigr),\\
\mathcal N_1 &= \mathbb C
\begin{bmatrix}
g^{(2)}\\
1
\end{bmatrix}
\oplus
\bigl(\varphi^{(2)}H^2(\mathbb D)\oplus\{0\}\bigr),
\end{aligned}
\]
with
\[
g^{(j)}
=
\sigma_j\cdot
\frac{\overline{\varphi^{(j)}(e^{i\theta_j})}\,\varphi^{(j)}-1}{z-e^{i\theta_j}},
\qquad j=1,2,
\]
for some inner function $\varphi^{(j)}\in H^\infty(\mathbb D)$ and  for some operator-valued inner functions $$
\Psi^{(1)} \in H^\infty\!\big(\mathbb D;\mathcal B(\mathcal E_{\mathcal M}, K_{\varphi^{(1)}})\big),\, \text{with} \,\,\,
\Psi(0)\mathcal E_{\mathcal M} \subset {g^{(1)}}^\perp \subset K_{\varphi^{(1)}},
$$
 $$
\Psi^{(2)} \in H^\infty\!\big(\mathbb D;\mathcal B(\mathcal E_{\mathcal M}, K_{\varphi^{(2)}})\big),\, \text{with }\,\, \,
\Psi(0)\mathcal E_{\mathcal N} \subset {g^{(2)}}^\perp \subset K_{\varphi^{(2)}}.
$$

\end{enumerate}

In the following lemma, we provide necessary conditions for the existence of a unitary intertwiner between lifted-type invariant subspaces.
In particular, such an operator must preserve the coordinate decomposition and act diagonally, inducing a shift-commuting unitary on the Hardy component.

\begin{lemma}\label{cor-dec}
 Let  $\mathcal M =\begin{bmatrix}
     \mathcal M_0 \\ \mathcal M_1
 \end{bmatrix},\,  \mathcal N =\begin{bmatrix}
     \mathcal N_0 \\ \mathcal N_1
 \end{bmatrix}$ be lifted type invariant subspaces of $T_{\sigma,\theta_1}$ and $T_{\sigma,\theta_2}$,  respectively.
Suppose that there exists a unitary operator
$U: \mathcal  M\to  \mathcal N$ such that
$$
UT_{\sigma,\theta_1}|_{ \mathcal  M}=T_{\sigma,\theta_2}|_{ \mathcal  N}U.
$$
Then the following assertions hold.
\begin{enumerate}
\item The unitary operator $U$ preserves the coordinate decomposition and hence admits the block representation
\[
U=
\begin{bmatrix}
U_1 & 0\\
0 & U_2
\end{bmatrix},
\]
where $U_1:\mathcal M_0\to \mathcal N_0$ and $U_2:\mathcal M_1\to \mathcal N_1$ are unitary operators.

\item The operator $U_1$ commutes with  $M_{z_1},$ i.e. ,
$
U_1(z_1F)=z_1U_1(F),
$
for all $F\in \mathcal M_0.$

\item $\mathcal  M, $ and $ \mathcal N$ must be of the lifted  type. The unitary operator $U_2$ maps $\varphi^{(1)}H^2(\mathbb D)\oplus \{0\}$ onto $\varphi^{(2)} H^2(\mathbb D) \oplus \{0\}$ via
\[
U_2
\begin{bmatrix}
\varphi^{(1)} f\\
0
\end{bmatrix}
=
\begin{bmatrix}
\lambda\, \varphi^{(2)} f\\
0
\end{bmatrix},
\qquad f \in H^2(\mathbb D),
\]
for some $\lambda \in \mathbb T$. 
Accordingly, we shall identify $U_2$ with a unitary map from $\varphi^{(1)}H^2(\mathbb D)$ onto $\varphi^{(2)}H^2(\mathbb D)$. Moreover, in the Type II case, $U_{2}$ maps $\operatorname{span} \begin{bmatrix}
    g^{(1)}\\ 1
\end{bmatrix}$ onto $\operatorname{span}\begin{bmatrix}
    g^{(2)}\\ 1
\end{bmatrix}$, i.e.,
$$ U_2\begin{bmatrix}
    g^{(1)}\\ 1
\end{bmatrix}=\beta\begin{bmatrix}
    g^{(2)}\\ 1
\end{bmatrix}$$
for some $\beta \in \mathbb C\setminus \{0\}.$

\item Considering the isometric
embedding $J:H^2(\mathbb D)\to H^2(\mathbb D^2),$ defined by $J(h)(z_1,z_2)=h(z_2),$ we have 
\[
U_1\bigl(J(G)\bigr)=J\bigl(U_2
    G \bigr),
\qquad G\in \varphi^{(1)} H^2(\mathbb D),
\]  and $U_1(Jg^{(1)})=\beta Jg^{(2)}$ for some $\beta \in \mathbb C\setminus \{0\}.$
\item $\dim \mathcal{ E_{M}} $ and $\dim \mathcal{ E_{N}}$ are equal.
\end{enumerate}
\end{lemma}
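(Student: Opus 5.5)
The plan is to find an intrinsic, unitarily invariant description of the coordinate subspace $\mathcal M_0\oplus\{0\}$ inside $\mathcal M$, and then read off (1)–(5) from the block structure of $T_{\sigma,\theta}$.

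\emph{Step 1: the defect operator.} The norm computation in the proof of Lemma~\ref{OPOS} uses three orthogonality facts: $z_1F\perp f_{z_2}$ in $H^2(\mathbb D^2)$, $zf\perp \sigma\alpha$ in $H^2(\mathbb D)$, and $e^{i\theta}$ is unimodular. These give
\[
\|T_{\sigma,\theta}(F,f,\alpha)^t\|^2-\|(F,f,\alpha)^t\|^2=\sigma^2\bigl(\|f\|^2+|\alpha|^2\bigr).
\]
Write $T_{\mathcal M}:=T_{\sigma,\theta_1}|_{\mathcal M}$. Then $D_{\mathcal M}:=T_{\mathcal M}^*T_{\mathcal M}-I_{\mathcal M}\ge 0$, and
\[
\ker D_{\mathcal M}=\{x\in\mathcal M:\|T_{\mathcal M}x\|=\|x\|\}=\mathcal M_0\oplus\{0\}.
\]
If $UT_{\mathcal M}=T_{\mathcal N}U$ with $U$ unitary, then $UD_{\mathcal M}U^*=D_{\mathcal N}$. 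Hence $U$ maps $\mathcal M_0\oplus\{0\}$ onto $\mathcal N_0\oplus\{0\}$, and therefore maps the orthocomplement $\{0\}\oplus\mathcal M_1$ onto $\{0\}\oplus\mathcal N_1$. This is (1).

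\emph{Step 2: assertion (2).} On $\mathcal M_0\oplus\{0\}$ the operator $T$ acts as $M_{z_1}$, so restricting the intertwining relation there gives $U_1M_{z_1}=M_{z_1}U_1$.

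\emph{Step 3: the lower block.} Apply Proposition~\ref{prop:compression-intertwining} with $\mathcal K_1=\{0\}\oplus\mathcal M_1$. The compression of $T_{\sigma,\theta_j}$ to this subspace is $B_{\sigma,\theta_j}|_{\mathcal M_1}$, so $U_2$ intertwines $B_{\sigma,\theta_1}|_{\mathcal M_1}$ and $B_{\sigma,\theta_2}|_{\mathcal N_1}$. The same norm argument for $B$ gives
\[
\|B(f,\alpha)\|^2-\|(f,\alpha)\|^2=\sigma^2|\alpha|^2,
\]
so the isometric part $\{x:\|Bx\|=\|x\|\}$ of $B|_{\mathcal M_1}$ is $\varphi^{(1)}H^2\oplus\{0\}$. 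A Type I space is entirely isometric for $B$, while a Type II space is not. Since the defect operator is preserved by $U_2$, $\mathcal M$ and $\mathcal N$ must have the same type. In both cases $U_2$ maps $\varphi^{(1)}H^2\oplus\{0\}$ onto $\varphi^{(2)}H^2\oplus\{0\}$. In the Type II case it also maps the one-dimensional orthocomplement $\operatorname{span}(g^{(1)},1)^t$ onto $\operatorname{span}(g^{(2)},1)^t$, which produces $\beta\neq 0$.

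\emph{Step 4: identifying $U_2$ on the shift part.} Since $\varphi^{(1)}H^2\oplus\{0\}$ is $B$-invariant and $B$ acts there as $S$, $U_2$ intertwines $S|_{\varphi^{(1)}H^2}$ with $S|_{\varphi^{(2)}H^2}$. Transport by the isometries $f\mapsto\varphi^{(j)}f$: then $(M_{\varphi^{(2)}})^{-1}U_2M_{\varphi^{(1)}}$ is a unitary on $H^2$ commuting with $S$. It is therefore an analytic Toeplitz operator $M_h$ that is unitary, which forces $h\equiv\lambda\in\mathbb T$. This completes (3).

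\emph{Step 5: assertion (4).} The $(1,2)$ block of $T$ is $\sigma J$ applied to the $H^2(\mathbb D)$-coordinate. Compare the $\mathcal N_0$-components of $UT_{\mathcal M}x=T_{\mathcal N}Ux$ for $x=(0,G,0)$, using $U_1M_{z_1}=M_{z_1}U_1$. The $M_{z_1}$-terms cancel, leaving $\sigma U_1J G=\sigma J(U_2G)$ for $G\in\varphi^{(1)}H^2$. The same comparison with $x=(0,g^{(1)},1)$ gives $U_1Jg^{(1)}=\beta Jg^{(2)}$.

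\emph{Step 6: assertion (5).} Because $U_1$ commutes with $M_{z_1}$, it maps the wandering space $\mathcal M_0\ominus z_1\mathcal M_0$ onto $\mathcal N_0\ominus z_1\mathcal N_0$. From the decomposition in the description theorem of Section~\ref{sec-2}, this wandering space is
\[
J(\varphi^{(1)}H^2)\;\oplus\;\bigl[\mathbb C\,Jg^{(1)}\bigr]\;\oplus\;\Psi^{(1)}(0)\mathcal E_{\mathcal M},
\]
where the middle summand appears only in Type II. By Step 5, $U_1$ maps the first two summands onto their counterparts for $\mathcal N$, so it maps $\Psi^{(1)}(0)\mathcal E_{\mathcal M}$ onto $\Psi^{(2)}(0)\mathcal E_{\mathcal N}$. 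Since $\Psi^{(j)}(0)$ is isometric, $\dim\mathcal E_{\mathcal M}=\dim\mathcal E_{\mathcal N}$.

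I expect the main obstacle to be Step 1, the observation that the defect kernel is exactly the coordinate space; once that holds, everything else is bookkeeping. The delicate points are the type-matching argument in Step 3 and checking in Step 6 that the wandering-space decomposition is genuinely orthogonal, which is where the condition $\Psi(0)\mathcal E_{\mathcal M}\subset {g}^\perp$ enters.
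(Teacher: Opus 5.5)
Steps 1--5 are correct. Step 1 is the paper's comparison of $\|U(F,0,0)^t\|$ with $\|U(z_1F,0,0)^t\|$, repackaged as the kernel of $T^*T-I$. Steps 3--4 take a different route from the paper but are valid. The paper gets $c=0$ by applying $T$ to $U(0,\varphi^{(1)}f,0)^t$ and comparing first coordinates, and gets $\lambda$ from $U_2(\varphi^{(1)})H^2(\mathbb D)=\varphi^{(2)}H^2(\mathbb D)$ plus uniqueness in Beurling's theorem. You instead use the defect of $B_{\sigma,\theta}$ to detect the type, and the fact that a unitary commuting with $S$ is a unimodular constant.

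The gap is in Step 6. The $M_{z_1}$-wandering subspace of $\Psi^{(1)}H^2(\mathbb D;\mathcal E_{\mathcal M})$ is not $\Psi^{(1)}(0)\mathcal E_{\mathcal M}$. It is
\[
\Psi^{(1)}H^2(\mathbb D;\mathcal E_{\mathcal M})\ominus z_1\Psi^{(1)}H^2(\mathbb D;\mathcal E_{\mathcal M})=\{\Psi^{(1)}(\cdot)\,e:\ e\in\mathcal E_{\mathcal M}\},
\]
which consists of functions of $z_1$ that are in general not constant. Moreover, $\Psi^{(1)}(0)$ need not be isometric, nor even injective. For example, take $\mathcal M_0=\varphi_{z_2}H^2(\mathbb D^2)\oplus z_1H^2(\mathbb D;K_\varphi)$, i.e.\ $\Psi(z_1)=z_1I_{K_\varphi}$. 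This is a legitimate lifted Type~I space with $\Psi(0)=0$, so $\Psi(0)\mathcal E_{\mathcal M}=\{0\}$ while $\dim\mathcal E_{\mathcal M}=\dim K_\varphi$.

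So both your description of $\mathcal M_0\ominus z_1\mathcal M_0$ and your final dimension count fail as written. Knowing $\dim\Psi^{(1)}(0)\mathcal E_{\mathcal M}=\dim\Psi^{(2)}(0)\mathcal E_{\mathcal N}$ would say nothing about $\dim\mathcal E$.

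The repair is short. $M_{\Psi^{(j)}}$ is an isometry commuting with $M_{z_1}$, so it carries the constants $\mathcal E=H^2(\mathbb D;\mathcal E)\ominus z_1H^2(\mathbb D;\mathcal E)$ isometrically onto $\Psi^{(j)}\mathcal E$. Put $\Psi^{(j)}\mathcal E$ in place of $\Psi^{(j)}(0)\mathcal E$. Your orthocomplement argument inside the wandering spaces then gives $U_1(\Psi^{(1)}\mathcal E_{\mathcal M})=\Psi^{(2)}\mathcal E_{\mathcal N}$, hence $\dim\mathcal E_{\mathcal M}=\dim\mathcal E_{\mathcal N}$. This is exactly the computation behind Proposition~\ref{shift-E}. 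The paper invokes it after showing that $U_1$ maps $\varphi^{(1)}_{z_2}H^2(\mathbb D^2)$ onto $\varphi^{(2)}_{z_2}H^2(\mathbb D^2)$, so that $U_1$ restricts to an $M_{z_1}$-intertwining unitary between the $\Psi$-parts.
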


\begin{proof}
$(i)$ Let $F\in \mathcal M_0 $ and  
\begin{equation}\label{im-U}
U\begin{bmatrix}
    F\\0\\0
\end{bmatrix}= \begin{bmatrix}
    G\\g\\c 
\end{bmatrix} \in   \begin{bmatrix}
  \mathcal N_0\\ \mathcal N_1 
\end{bmatrix},
\end{equation}
where $G \in \mathcal N_0$ and $\begin{bmatrix}
    g\\c 
\end{bmatrix} \in \mathcal N_1.$ Since $UT_{\sigma, \theta_1}|_{\mathcal M}=T_{\sigma, \theta_2}|_{\mathcal N}U,$ we have
 $$U\begin{bmatrix}
    z_1F\\0\\0
\end{bmatrix}=\begin{bmatrix}
    z_1G+\sigma g_{z_2}\\zg+c\sigma\\c e^{i\theta_2}
\end{bmatrix}.$$
Computing the norm of both sides, we get 
\beq\label{norm-F}
\|F\|^2=\|G\|^2+(\sigma^2+1)(\|g\|^2+|c|^2).
\eeq
By \eqref{im-U}, we have $\|F\|^2=\|G\|^2+\|g\|^2+|c|^2.$ This together with \eqref{norm-F} yields that $g=0$ and $c=0.$
We obtain
$U(F,0,0)^{t}=(U_1F,0,0)^{t}$ for some isometry $U_1:\mathcal M_0 \rightarrow \mathcal N_0$.
Applying the same argument to $U^{-1}$ shows that $U_1$ is onto. Hence, $U_1$ is unitary. A similar argument works in the case $\begin{bmatrix}
    0\\ \mathcal M_1
\end{bmatrix}.$ Thus we have the required unitary operators $U_1:\mathcal M_0 \rightarrow \mathcal N_0$ and $U_2: \mathcal M_1\rightarrow \mathcal N_1$.

\noindent$(ii)$  The intertwining relation $T_{\sigma, \theta_1}|_{\mathcal M}U=UT_{\sigma,\theta_2}|_{\mathcal N}$ applied to $(F,0,0)^{t}$ gives
\[
U_1(z_1F)=z_1U_1(F), \qquad F \in \mathcal M_0.
\]

\noindent$(iii)$ Note that if a vector $$\begin{bmatrix}
    0\\G\\0
\end{bmatrix} \in \begin{bmatrix}
    \mathcal M_0\\ \mathcal M_1
\end{bmatrix},$$
then $G$ must be in $\varphi^{(1)} H^2(\mathbb D).$
Let
\begin{equation} \label{Umap}U\begin{bmatrix}
   0\\ \varphi^{(1)} f \\0 
\end{bmatrix}=\begin{bmatrix}
    0\\ g\\c
\end{bmatrix}, \quad f \in H^2 (\mathbb D), \,\begin{bmatrix}
g\\c
\end{bmatrix} 
\in \mathcal M_1.
\end{equation}
Then
$$T_{\sigma,\theta_2}U\begin{bmatrix}
   0\\ \varphi^{(1)} f \\0 
\end{bmatrix}=T_{\sigma,\theta_2}\begin{bmatrix}
    0\\ g\\c
\end{bmatrix}.$$
By the intertwining relation $UT_{\sigma,\theta_1}|_{\mathcal M}=T_{\sigma,\theta_2}|_{\mathcal{N}}U$, we have
$$UT_{\sigma,\theta_1}\begin{bmatrix}
   0\\ \varphi^{(1)} f \\0 
\end{bmatrix}=U\begin{bmatrix}
   \sigma \varphi_{1_{z_2}}f_{z_2}\\ z\varphi _1f \\0 
\end{bmatrix}=\begin{bmatrix}
    \sigma g_{z_2}\\ zg+\sigma c\\ce^{i\theta_2}
\end{bmatrix}=T_{\sigma,\theta_2}\begin{bmatrix}
    0\\ g\\c
\end{bmatrix}.$$
But this yields that $U_{1} (\sigma \varphi_{1_{z_2}}f_{z_2})=\sigma g_{z_2}.$ Thus $\|f\|=\|g\|.$ From \eqref{Umap}, we obtain $c=0$ and  $U_2$  maps $\varphi^{(1)} H^2(\mathbb D) \oplus \{0\} $ onto $ \varphi^{(2)} H^2(\mathbb D) \oplus \{0\}.$  Furthermore, since $U_2$ is unitary, $U_2$ must map the orthogonal complement of $\varphi^{(1)}H^2(\mathbb D)\oplus \{0\}$ in $\mathcal M_1$ to the orthogonal complement of $\varphi^{(2)}H^2(\mathbb D) \oplus \{0\}$ in $\mathcal N_1.$  Thus, we can assume that  in the Type II case, $U_2\begin{bmatrix}
    g^{(1)} \\1
\end{bmatrix}=\beta\begin{bmatrix}
    g^{(2)} \\1
\end{bmatrix}$
 for some $\beta \in \mathbb C$. This shows that $\mathcal M,$ $\mathcal N$ must be of the same type.

By comparing the second coordinate in
\[
U T_{\sigma,\theta_1}
\begin{bmatrix}
0\\
\varphi^{(1)}\\
0
\end{bmatrix}
=
T_{\sigma,\theta_2}\big|_{\mathcal N}
\,U
\begin{bmatrix}
0\\
\varphi^{(1)}\\
0
\end{bmatrix},
\]
we obtain
\[
U_2( z\varphi^{(1)})
=z\,U_2
    (\varphi^{(1)} ).
\]
It follows that
$$U_2(\varphi^{(1)}H^2(\mathbb D) )= U_2
    (\varphi^{(1)})H^2(\mathbb D)
.$$ Since $U_2(\varphi^{(1)} H^2(\mathbb D))=\varphi^{(2)} H^2(\mathbb D),$ we obtain $U_2(\varphi^{(1)})=\lambda \varphi^{(2)}$ for some $|\lambda|=1.$

\noindent$(iv)$ This is straightforward from the relation $UT_{\sigma, \theta_1}|_{\mathcal M}=T_{\sigma, \theta_2}|_{\mathcal N}U$ applied on the vector $(0,G,0)^t,\, G \in \varphi^{(1)}H^2(\mathbb D).$

\noindent$(v)$ Assume that $\mathcal M$ and $\mathcal N$ are of Type~I.
By part~(i), the unitary $U_1$ maps $\mathcal M_0$ onto $\mathcal N_0$. 
Moreover, by parts~(iii) and~(iv), $U_1$ maps 
$\varphi_{z_2}^{(1)} H^2(\mathbb D^2)$ onto 
$\varphi_{z_2}^{(2)} H^2(\mathbb D^2)$. 
Consequently, $U_1$ must map
\[
\Psi^{(1)} H^2(\mathbb D; \mathcal E_{\mathcal M})
\quad \text{onto} \quad
\Psi^{(2)} H^2(\mathbb D; \mathcal E_{\mathcal N}).
\]
By part (ii), we note that $U_1(z_1 F)=z_1 U_1(F)$ for $F\in \mathcal M_0.$ Since $U_1$ intertwines $M_{z_1}$, the subspaces 
\[
\Psi^{(1)} H^2(\mathbb D;\mathcal E_{\mathcal M})
\quad \text{and} \quad
\Psi^{(2)} H^2(\mathbb D;\mathcal E_{\mathcal N})
\]
are unitarily equivalent $M_{z_1}$-invariant subspaces. 
Hence, by the Proposition~\ref{shift-E} theorem,
\[
\dim \mathcal E_{\mathcal M}=\dim \mathcal E_{\mathcal N}.
\]
An analogous argument yields the same conclusion in the Type~II case.
\end{proof}
\begin{remark}
From part~(iv) of Lemma~\ref{cor-dec}, we observe that certain information carried by $U_2$ is reflected in $U_1$. 
This motivated a natural strategy to decompose the first coordinate into several components, according to the structural relationship between $U_1$ and $U_2$.

\end{remark}

Let us now proceed to prove our main theorem, which we restate here for convenience.
\begin{theorem}
Let $\theta_1,\theta_2\in[0,2\pi)$ and $\sigma_1,\sigma_2>0$. 
Let
\[
\mathcal M=
\begin{bmatrix}
\mathcal M_0\\
\mathcal M_1
\end{bmatrix},
\qquad
\mathcal N=
\begin{bmatrix}
\mathcal N_0\\
\mathcal N_1
\end{bmatrix}
\]
be nonzero closed invariant subspaces of lifted Type~I or Type~II for the $3$-Brownian shifts 
$T_{\sigma_1,\theta_1}$ and $T_{\sigma_2,\theta_2}$, respectively. Then
\[
T_{\sigma_1,\theta_1}\big|_{\mathcal M}
\cong
T_{\sigma_2,\theta_2}\big|_{\mathcal N}
\]
if and only if $\sigma_1=\sigma_2, \, \dim \mathcal E_{\mathcal M}=\dim \mathcal E_{\mathcal N}$ and one of the following holds:

\begin{enumerate}
\item[(i)] Both $\mathcal M$ and $\mathcal N$ are of lifted Type~I.
\medskip
\item[(ii)] Both $\mathcal M$ and $\mathcal N$ are of lifted Type~II,  $
\theta_1=\theta_2,$ and $\|g^{(1)}\|=\|g^{(2)}\|.
$
\end{enumerate}
\end{theorem}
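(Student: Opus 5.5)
The argument splits into necessity and sufficiency, and necessity comes mostly from Lemma~\ref{eq-sig} and Lemma~\ref{cor-dec}.

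\emph{Necessity.} Suppose $U:\mathcal M\to\mathcal N$ is a unitary intertwining the restrictions.
\begin{enumerate}
\item Lemma~\ref{eq-sig} gives $\sigma_1=\sigma_2=:\sigma$.
\item Lemma~\ref{cor-dec} gives a diagonal $U=U_1\oplus U_2$, with $\mathcal M$ and $\mathcal N$ of the same lifted type and $\dim\mathcal E_{\mathcal M}=\dim\mathcal E_{\mathcal N}$. In the Type~I case this is already the full conclusion.
\item In the Type~II case, $U_2$ sends $x_1=(g^{(1)},1)^t$ to $\beta x_2$ with $x_2=(g^{(2)},1)^t$, and it maps $\varphi^{(1)}H^2\oplus\{0\}$ onto $\varphi^{(2)}H^2\oplus\{0\}$. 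Since $U_2$ is unitary, $\|x_1\|=|\beta|\,\|x_2\|$.
\item Compress to the one-dimensional complements $\mathcal M_1\ominus(\varphi^{(1)}H^2\oplus 0)=\mathbb Cx_1$ and its analogue in $\mathcal N_1$. Via Proposition~\ref{prop:compression-intertwining} (applied to $U_2$ and $B_{\sigma,\theta_j}$, using Lemma~\ref{cor-dec}(iii) that $U_2$ preserves these subspaces), this compression is multiplication by $e^{i\theta_j}$, because the third coordinate of $B x_j$ is $e^{i\theta_j}$ and $(\varphi H^2\oplus 0)\perp$ has a nonzero third coordinate only through $x_j$. Hence $e^{i\theta_1}=e^{i\theta_2}$, i.e.\ $\theta_1=\theta_2$.
\item To get $\|g^{(1)}\|=\|g^{(2)}\|$, I will show $|\beta|=1$. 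Use Lemma~\ref{cor-dec}(iv), which gives $U_1(Jg^{(1)})=\beta Jg^{(2)}$. Alternatively, apply the intertwining to $(0,x_1)$: the first coordinate of $T(0,g^{(1)},1)$ is $\sigma Jg^{(1)}$, so $U_1(\sigma Jg^{(1)})=\sigma\beta Jg^{(2)}$. Since $U_1$ is isometric and $J$ is an isometry, $\|g^{(1)}\|=|\beta|\,\|g^{(2)}\|$. Combined with $\|g^{(1)}\|^2+1=|\beta|^2(\|g^{(2)}\|^2+1)$ from step 3, this forces $|\beta|=1$ and hence $\|g^{(1)}\|=\|g^{(2)}\|$. (This assumes $g^{(2)}\ne0$; if $g^{(2)}=0$, step 3 already gives $|\beta|\ge1$ and the first identity gives $g^{(1)}=0$.)
\end{enumerate}

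\emph{Sufficiency.} I construct $U=U_1\oplus U_2$ explicitly. For $U_2$:
\begin{itemize}
\item on $\varphi^{(1)}f\oplus 0$ set $U_2(\varphi^{(1)}f)=\varphi^{(2)}f$;
\item in Type~II additionally set $U_2x_1=x_2$, which is isometric since $\|g^{(1)}\|=\|g^{(2)}\|$.
\end{itemize}
For $U_1$, act on the three orthogonal summands of $\mathcal M_0$:
\begin{itemize}
\item on $\varphi^{(1)}_{z_2}H^2(\mathbb D^2)$, take $F\varphi^{(1)}_{z_2}\mapsto F\varphi^{(2)}_{z_2}$ (multiplication by inner functions of $z_2$ is isometric);
\item on $g^{(1)}_{z_2}H^2_{z_1}$, take $h(z_1)g^{(1)}(z_2)\mapsto h(z_1)g^{(2)}(z_2)$, isometric by equality of norms;
\item on $\Psi^{(1)}H^2(\mathbb D;\mathcal E_{\mathcal M})$, use the $M_{z_1}$-commuting unitary from Proposition~\ref{shift-E}.
\end{itemize}
Orthogonality of the summands (guaranteed by conditions \eqref{A}, \eqref{B}) makes $U_1$ unitary.

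Then I check $UT_1=T_2U$ on generators:
\begin{itemize}
\item on $(F,0,0)$ it holds because $U_1$ commutes with $z_1$;
\item on $(0,\varphi^{(1)}f,0)$ the first coordinate gives $\sigma J(\varphi^{(1)}f)=\sigma\varphi^{(1)}_{z_2}f_{z_2}$, mapped to $\sigma\varphi^{(2)}_{z_2}f_{z_2}$, and the second coordinate matches trivially;
\item on $(0,x_1)$ one needs $T_1(0,x_1)=(\sigma g^{(1)}_{z_2},\,B_1x_1)$ and $B_1x_1=e^{i\theta}x_1+(\text{element of }\varphi^{(1)}H^2)$.
\end{itemize}

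The last bullet is the main obstacle. Compute $B x_1=(zg^{(1)}+\sigma,\,e^{i\theta})$; then $zg+\sigma-e^{i\theta}g=\sigma\overline{\varphi(e^{i\theta})}\varphi$ from the formula for $g$. So $B_jx_j=e^{i\theta}x_j+\sigma\overline{\varphi^{(j)}(e^{i\theta})}\varphi^{(j)}$, and the intertwining requires $\overline{\varphi^{(1)}(e^{i\theta})}\varphi^{(2)}=\overline{\varphi^{(2)}(e^{i\theta})}\varphi^{(2)}$. I would repair this by choosing the phase in $U_2$ on $\varphi^{(1)}H^2$ as $\lambda=\overline{\varphi^{(2)}(e^{i\theta})}\varphi^{(1)}(e^{i\theta})$ (unimodular) and using the same $\lambda$ in the corresponding part of $U_1$. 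With this choice all relations match, completing the proof.
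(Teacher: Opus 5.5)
Your proof is correct, and it departs from the paper's in two places.

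\textbf{Necessity.} After getting $U=U_1\oplus U_2$ from Lemma~\ref{cor-dec}, the paper only observes that $U_2$ intertwines $B_{\sigma,\theta_1}|_{\mathcal M_1}$ and $B_{\sigma,\theta_2}|_{\mathcal N_1}$. It then imports $\theta_1=\theta_2$ and $\|g^{(1)}\|=\|g^{(2)}\|$ from the classical classification \cite[Theorem~2.1]{DDS2025}. You derive both facts directly:
\begin{itemize}
\item $\theta_1=\theta_2$ from the compression to $\mathbb C x_j$;
\item $|\beta|=1$ by playing $\|x_1\|=|\beta|\,\|x_2\|$ against $\|g^{(1)}\|=|\beta|\,\|g^{(2)}\|$, where the latter comes from the $J$-block relation $U_1(Jg^{(1)})=\beta Jg^{(2)}$.
\end{itemize}
This is self-contained and uses the genuinely three-component structure. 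Subtracting the two squared identities gives $|\beta|^2=1$ at once, so the case $g^{(2)}=0$ needs no separate treatment.

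\textbf{Sufficiency.} The paper uses $\varphi^{(1)}f\mapsto\varphi^{(2)}f$ together with $x_1\mapsto x_2$ and simply asserts that this map intertwines the restricted Brownian shifts. Your identity $B_{\sigma,\theta}x_j=e^{i\theta}x_j+\sigma\overline{\varphi^{(j)}(e^{i\theta})}\,\varphi^{(j)}$ shows that this fails whenever $\varphi^{(1)}(e^{i\theta})\neq\varphi^{(2)}(e^{i\theta})$. For example, take $\theta=0$, $\varphi^{(1)}=z$, $\varphi^{(2)}=-z$. Then $g^{(1)}=g^{(2)}=\sigma$ and $\mathcal M=\mathcal N$, yet the paper's map does not intertwine. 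Your phase $\lambda=\overline{\varphi^{(2)}(e^{i\theta})}\,\varphi^{(1)}(e^{i\theta})$, used consistently in $U_2$ and on $\varphi^{(1)}_{z_2}H^2(\mathbb D^2)$ in $U_1$, is therefore a genuine repair rather than a cosmetic choice. On this point your version is more careful than the paper's.

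Two small points remain.
\begin{itemize}
\item You use without comment that $c_j=\varphi^{(j)}(e^{i\theta})$ is unimodular, which $\lambda$ needs. It follows from $g^{(j)}\in H^2(\mathbb D)$: if $|c_j|\neq 1$, then $|\overline{c_j}\varphi^{(j)}-1|\geq\bigl||c_j|-1\bigr|>0$ a.e.\ on $\mathbb T$, and dividing by $z-e^{i\theta}$ produces a function outside $L^2(\mathbb T)$.
\item The orthogonality of the three summands of $\mathcal M_0$ should be taken from the $\oplus$ in the definition of the lifted Type~II form, not from \eqref{A}. The condition $\Psi^{(1)}(0)\mathcal E_{\mathcal M}\perp g^{(1)}$ alone does not give $\Psi^{(1)}H^2(\mathbb D;\mathcal E_{\mathcal M})\perp g^{(1)}_{z_2}H^2_{z_1}(\mathbb D)$. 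That requires $\Psi^{(1)}(z)\mathcal E_{\mathcal M}\perp g^{(1)}$ for every $z\in\mathbb D$.
\end{itemize}
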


\begin{proof}
    To show the necessity part, assume that 
    \[
T_{\sigma_1,\theta_1}\big|_{\mathcal M}\cong
T_{\sigma_2,\theta_2}\big|_{\mathcal N},
\]
and let $U:\mathcal M \rightarrow \mathcal N$ be a unitary operator  such that $UT_{\sigma_1,\theta_1}\big|_{\mathcal M}=
T_{\sigma_2,\theta_2}\big|_{\mathcal N}U.$ From Lemma~\ref{eq-sig}, $\sigma_1=\sigma_2$. 
By Lemma~\ref{cor-dec}(iii), the invariant subspaces $\mathcal{M}$ and $\mathcal N$
are of the same type and by Lemma~~\ref{cor-dec}(v), $\dim \mathcal E_{\mathcal M}=\dim \mathcal E_{\mathcal N}.$ 

Furthermore, by Lemma~\ref{cor-dec}(i), the intertwining unitary $U$ decomposes as
\[
U = U_1 \oplus U_2
\quad \text{from } \mathcal M_0 \oplus \mathcal M_1 \text{ onto } \mathcal N_0 \oplus \mathcal N_1.
\]
In addition, Proposition~\ref{prop:compression-intertwining} yields
\[
U_2\, B_{\sigma,\theta_1}\big|_{\mathcal M_1}
=
B_{\sigma,\theta_2}\big|_{\mathcal N_1}\, U_2.
\]
It now follows from the unitary equivalence theory for Brownian shifts 
\cite[Theorem~2.1]{DDS2025} that, if $\mathcal M_1$ and $\mathcal N_1$ are of Type~II, then
\[
\theta_1=\theta_2
\quad \text{and} \quad
\|g^{(1)}\|=\|g^{(2)}\|.
\]

Now to see the sufficiency part, assume that $\sigma:=\sigma_1=\sigma_2.$ We divide the proof into  the following two cases according to the type of $\mathcal M, \mathcal{N}$:

\noindent \textit{(i) $\mathcal M, \mathcal N$ are of lifted Type I.} 
Since $\dim \mathcal E_{\mathcal{M}}=\dim \mathcal E_{\mathcal{N}},$ take a unitary $V: \mathcal E_{\mathcal M} \rightarrow \mathcal E_{\mathcal N}$ and define a  unitary operator 
$V_1:\psi^{(1)} H^2(\mathbb D; \mathcal E_{\mathcal M}) \rightarrow \psi^{(2)} H^2(\mathbb D; \mathcal E_{\mathcal N})$ by
$$V_1\left(\sum_{i=0}^{\infty} z^i  \psi^{(1)}v_i\right)= \sum_{i=0}^{\infty} z^i \psi^{(2)} V  v_i, \quad v_i \in \mathcal{E}_{\mathcal M}.$$

\noindent Clearly $V_1$ commutes with $M_z.$ Now define a natural choice of  unitary operators $$U_1:\varphi^{(1)}_{z_2}H^2(\mathbb D^2)\rightarrow \varphi^{(2)}_{z_2}H^2(\mathbb D),\,\,\, U_2:
\varphi^{(1)}_{z_2}H^2(\mathbb D^2)\oplus  \{0\}
\rightarrow \varphi^{(2)}_{z_2}H^2(\mathbb D^2)\oplus  \{0\} $$ by $$ U_1(\varphi_{z_2}^{(1)}G)=\varphi_{z_2}^{(2)}G, \qquad U_2\begin{bmatrix}
    \varphi^{(1)}f\\ 0
\end{bmatrix}= \begin{bmatrix} \varphi^{(2)}f\\ 0
\end{bmatrix}, \quad G\in H^2(\mathbb D^2), \,f \in H^2(\mathbb D).$$

In this case, the unitary $U: \mathcal M\rightarrow \mathcal N,$ is defined by
$$U\begin{bmatrix}
    F_1+F_2\\f\\0
\end{bmatrix}= \begin{bmatrix}
    U_1F_1+VF_2\\U_2\begin{bmatrix}
        f\\0
    \end{bmatrix}
\end{bmatrix},
$$ 
where $F_1 \in \varphi^{(1)}_{z_2}H^2(\mathbb D^2), \,F_2 \in \psi^{(1)} H^2(\mathbb D; \mathcal E_{\mathcal M}), \,f \in \varphi^{(1)} H^2(\mathbb D^2).$ Now it is easy to verify that $UT_{\sigma, \theta_1}|_\mathcal M = T_{\sigma, \theta_2}|_\mathcal NU$.
Indeed, it suffices to verify the intertwining on vectors of the form

\begin{equation*}
\begin{bmatrix}F \\ 0\\ 0\end{bmatrix}, \, F \in M_0\\, \, \text{ and }
\begin{bmatrix}0\\ f\\ 0\end{bmatrix}, \,f\in\varphi^{(1)}H^2(\mathbb D).
\end{equation*}
It is easy to verify the intertwining relation for the first vector using Lemma~\ref{cor-dec}(ii). To verify this for the second vector,  write $f=\varphi^{(1)}h$, $h\in H^2(\mathbb D)$. Then
\[
\begin{aligned}
UT_{\sigma,\theta_1}\!\begin{bmatrix}0\\ f\\ 0\end{bmatrix}
&=U\!\begin{bmatrix}\sigma Jf\\ Sf\\ 0\end{bmatrix}
 =\begin{bmatrix}\sigma U_1Jf\\ U_2\begin{bmatrix}
        Sf\\0
    \end{bmatrix}\end{bmatrix} \\
&=\begin{bmatrix}\sigma U_1(\varphi_{z_2}^{(1)}h)\\ U_2B_{\sigma, \theta_1}\begin{bmatrix}
        f\\0
    \end{bmatrix}\end{bmatrix}
 =\begin{bmatrix}\sigma \varphi_{z_2}^{(2)}h\\ B_{\sigma, \theta_2}U_2\begin{bmatrix}
        f\\0 \end{bmatrix}\end{bmatrix} \\
&=\begin{bmatrix}\sigma J\varphi^{(2)}h\\ B_{\sigma, \theta_2}U_2\begin{bmatrix}
        f\\0 \end{bmatrix}\end{bmatrix}
 =T_{\sigma,\theta_2}U\!\begin{bmatrix}0\\ f\\ 0\end{bmatrix}.
\end{aligned}
\]

\noindent \textit{(ii) $\mathcal M, \mathcal N$ are of lifted Type II.}
The components
\[
\varphi_{z_2}^{(i)}H^2(\mathbb D^2), \Psi^{(i)}H^2(\mathbb D;\mathcal E_{\mathcal M})
\quad\text{and}\quad
\varphi^{(i)}H^2(\mathbb D)
\]
are handled exactly as in part (i). Thus, it remains only to define unitary maps on the additional summands.

Define a linear map
\[
\widetilde U:
\operatorname{span}
\begin{bmatrix}
g^{(1)}\\
1
\end{bmatrix}
\longrightarrow
\operatorname{span}
\begin{bmatrix}
g^{(2)}\\
1
\end{bmatrix}
\]
by
\[
\widetilde U
\begin{bmatrix}
g^{(1)}\\
1
\end{bmatrix}
=
\begin{bmatrix}
g^{(2)}\\
1
\end{bmatrix}.
\]
Since $\|g^{(1)}\|=\|g^{(2)}\|$, this map is unitary.

Next, define
\[
\widetilde V:
g_{z_2}^{(1)}H^2_{z_1}(\mathbb D)
\longrightarrow
g_{z_2}^{(2)}H^2_{z_1}(\mathbb D)
\]
by
\[
\widetilde V(z_1^k g_{z_2}^{(1)})=z_1^k g_{z_2}^{(2)}, \qquad k\ge0.
\]
Now extend it by linearity and continuity to all of \( g_{z_2}^{(1)}H^2_{z_1}(\mathbb D) \). Then $\widetilde V$ is unitary and commutes with $M_{z_1}$.

Combining these maps with the unitary operators constructed in part (i), we obtain a unitary
$
U:\mathcal M\to\mathcal N
$
such that
\[
UT_{\sigma,\theta_1}\big|_{\mathcal M}
=
T_{\sigma,\theta_2}\big|_{\mathcal N}U.
\]
To verify this it suffices to check the intertwining relation on the vector
$$
\begin{bmatrix}
g^{(1)}f \\
0\\
0
\end{bmatrix}, \quad f \in H^2_{z_1}(\mathbb D) \text{ and}\quad 
\begin{bmatrix}
0 \\
g_{z_2}^{(1)}\\
1
\end{bmatrix}.
$$
On the first vector, it is easy to verify, so we verify on the second vector. By definition,
\[
T_{\sigma,\theta_1}
\begin{bmatrix}
0\\[1mm]
g^{(1)}\\[1mm]
1
\end{bmatrix}
=
\begin{bmatrix}
\sigma Jg^{(1)}\\[1mm]
Sg^{(1)}+\sigma\\[1mm]
e^{i\theta_1}
\end{bmatrix}.
\]
Applying \(U\), we obtain
\[
UT_{\sigma,\theta_1}
\begin{bmatrix}
0\\[1mm]
g^{(1)}\\[1mm]
1
\end{bmatrix}
=
\begin{bmatrix}
\sigma \widetilde V(Jg^{(1)})\\[1mm]
\widetilde U\!\begin{bmatrix}Sg^{(1)}+\sigma\\[1mm] e^{i\theta_1}\end{bmatrix}
\end{bmatrix}.
\]
Now, by the definition of \(\widetilde V\),
\[
\widetilde V(Jg^{(1)})=Jg^{(2)},
\]
and since $\widetilde U$ intertwines the restricted Brownian shifts on $\mathcal M_1$ and $\mathcal N_1$, i.e,
\[
\widetilde U\, B_{\sigma,\theta_1}\big|_{\mathcal M_1}
=
B_{\sigma,\theta_2}\big|_{\mathcal N_1}\, \widetilde U,
\]
we obtain
\[
\widetilde U
\begin{bmatrix}
Sg^{(1)}+\sigma\\[1mm]
e^{i\theta_1}
\end{bmatrix}
=
\widetilde U\, B_{\sigma,\theta_1}
\begin{bmatrix}
g^{(1)}\\[1mm]
1
\end{bmatrix}
=
B_{\sigma,\theta_2}\,
\widetilde U
\begin{bmatrix}
g^{(1)}\\[1mm]
1
\end{bmatrix}
=
\begin{bmatrix}
Sg^{(2)}+\sigma\\[1mm]
e^{i\theta_2}
\end{bmatrix}.
\] 
 Therefore,
\[
UT_{\sigma,\theta_1}
\begin{bmatrix}
0\\[1mm]
g^{(1)}\\[1mm]
1
\end{bmatrix}
=
\begin{bmatrix}
\sigma Jg^{(2)}\\[1mm]
Sg^{(2)}+\sigma\\[1mm]
e^{i\theta_2}
\end{bmatrix}
=
T_{\sigma,\theta_2}
\begin{bmatrix}
0\\[1mm]
g^{(2)}\\[1mm]
1
\end{bmatrix}
=
T_{\sigma,\theta_2}U
\begin{bmatrix}
0\\[1mm]
g^{(1)}\\[1mm]
1
\end{bmatrix}.
\]
Hence,
\[
UT_{\sigma,\theta_1}
\begin{bmatrix}
0\\[1mm]
g^{(1)}\\[1mm]
1
\end{bmatrix}
=
T_{\sigma,\theta_2}U
\begin{bmatrix}
0\\[1mm]
g^{(1)}\\[1mm]
1
\end{bmatrix}.
\]
This completes the proof in the lifted Type II case.
\end{proof}

\section{$\frac{1}{\|T_{\sigma, \theta}\|}T_{\sigma, \theta}$ belongs to the class $ C_{00}$} \label{Sec-5}

 In this section, we show that a $3$-Brownian shift $T_{\sigma, \theta} $ does not belong to $ C_{00}$ by showing that it is not power bounded. Then we prove that the normalized $3$-Brownian shift $\frac{1}{\sqrt{1+\sigma^2}}T_{\sigma, \theta} $ belongs to the class $ C_{00}.$  The results and techniques are analogous to the results obtained for Brownian shifts in \cite[Section~3]{DDS2025}.

\begin{proposition}
For $\sigma>0, \theta \in [0,2\pi),$ the operator $T_{\sigma,\theta}$ is not power bounded.
\end{proposition}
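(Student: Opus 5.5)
The plan is to exhibit an explicit orbit whose norm grows without bound, using the formula \eqref{T-n} for $T_{\sigma,\theta}^n e_3$ already established in the proof of the non-cyclicity proposition. Alternatively, one can use the intertwining $PT_{\sigma,\theta}=B_{\sigma,e^{i\theta}}P$ together with the known fact (quoted in the introduction, from \cite[Section~3]{DDS2025}) that $B_{\sigma,\theta}$ is not power bounded. Since $P$ is a contraction with $\|P\|\le 1$, we get $\|B_{\sigma,e^{i\theta}}^n P x\| = \|P T_{\sigma,\theta}^n x\|\le \|T_{\sigma,\theta}^n\|\,\|x\|$. Choosing $x=(0,y)$ with $y\in H^2(\mathbb D)\oplus\mathbb C$ arbitrary gives $\|B_{\sigma,e^{i\theta}}^n\|\le \|T_{\sigma,\theta}^n\|$. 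Hence $\sup_n\|T^n_{\sigma,\theta}\|=\infty$. This is essentially a one-line argument, and it is the route I would write first.

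For a self-contained check, I would also make the growth explicit via \eqref{T-n}. The middle coordinate of $T_{\sigma,\theta}^n e_3$ is $\sigma\sum_{k=0}^{n-1}e^{ik\theta}z^{n-1-k}$, whose $H^2$ norm is $\sigma\sqrt n$ because the monomials are orthonormal. The first coordinate is $\sigma^2\sum_{k=0}^{n-2}e^{ik\theta}h_{n-2-k}$. Here $h_m$ are sums of distinct monomials of total degree $m$, so the $h_m$ for different $m$ are mutually orthogonal with $\|h_m\|^2=m+1$. Hence
\[
\|F_n\|^2=\sigma^4\sum_{m=0}^{n-2}(m+1)=\sigma^4\frac{n(n-1)}{2}.
\]
Therefore $\|T^n_{\sigma,\theta}\|\ge \|T_{\sigma,\theta}^n e_3\|\ge \sigma^2\sqrt{n(n-1)/2}\to\infty$, which shows linear growth in $n$. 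This is even faster than the $\sqrt n$ growth of the classical Brownian shift.

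There is no real obstacle. The only point needing care is the orthogonality of the $h_m$ and the modulus computation, where the unimodular factors $e^{ik\theta}$ drop out because distinct $k$ give distinct degrees $m=n-2-k$.
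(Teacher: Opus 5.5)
Your proposal is correct. Your first route is essentially the paper's proof, which reads $\|T^n_{\sigma,\theta}\|^2\ge\|T^n_{\sigma,\theta}e_3\|^2\ge\|B^n_{\sigma,\theta}(0,1)^t\|^2\ge 1+n\sigma^2$. This is the intertwining $PT_{\sigma,\theta}=B_{\sigma,e^{i\theta}}P$ applied to the single vector $e_3$. The difference is that the paper computes the growth of the Brownian orbit explicitly, whereas you import it from the known non-power-boundedness of $B_{\sigma,\theta}$.

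Your second route genuinely adds something, because it keeps the $H^2(\mathbb D^2)$ component that the paper discards. Using the orthogonality of the $h_m$ and $\|h_m\|^2=m+1$, you get
\[
\|T^n_{\sigma,\theta}e_3\|^2=1+n\sigma^2+\sigma^4\,\frac{n(n-1)}{2}.
\]
So $\|T^n_{\sigma,\theta}\|$ grows at least linearly in $n$, not merely like $\sqrt n$, and the argument is self-contained.

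This growth is of the correct order. The estimates in the proof of the $C_{00}$ theorem give $\|T^n_{\sigma,\theta}\|=O(n)$. In fact, the orbits of $(F,0,0)^t$, $(0,f,0)^t$ and $(0,0,\alpha)^t$ remain mutually orthogonal, because of the $z_1$-degree and total-degree separation. Consequently $\|T^n_{\sigma,\theta}(F,f,\alpha)^t\|^2=\|F\|^2+(1+n\sigma^2)\|f\|^2+\bigl(1+n\sigma^2+\sigma^4\tfrac{n(n-1)}{2}\bigr)|\alpha|^2$, and your lower bound is exactly $\|T^n_{\sigma,\theta}\|^2$.

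This reflects the $3$-isometric nature of $T_{\sigma,\theta}$, namely quadratic growth of $\|T^n x\|^2$. The paper's bound only sees the $2$-isometric lower block. The paper's argument is shorter; yours gives the sharp growth rate.
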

\begin{proof}
This follows from
\begin{equation*}
\|T^n_{\sigma, \theta}\|^2\geq \|T^n_{\sigma, \theta}(0,0,1)\|^2\geq \|B^n_{\sigma, \theta}(0,1)\|^2\geq 1+n \sigma^2, \quad n \geq 0. \qedhere
\end{equation*}
\end{proof}

The following is the main result of this section.

\begin{theorem}
For $\sigma>0, \theta \in [0,2\pi),$ let $
\widetilde T_{\sigma, \theta}
:=
\frac{1}{\sqrt{1+\sigma^2}}\,T_{\sigma,\theta}.
$ Then $$\widetilde T_{\sigma,\theta} \in C_{00}.$$
\end{theorem}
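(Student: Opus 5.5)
The approach is to show that $\|\widetilde T_{\sigma,\theta}^n\|\to 0$ in operator norm. Since $\|(\widetilde T_{\sigma,\theta}^{*})^{n}\|=\|\widetilde T_{\sigma,\theta}^{n}\|$, this gives both $\widetilde T_{\sigma,\theta}^{*n}\to0$ and $\widetilde T_{\sigma,\theta}^{n}\to 0$ in SOT. First, by Lemma~\ref{OPOS} applied to the whole space (its proof gives $\|T_{\sigma,\theta}\|=\sqrt{1+\sigma^2}$), $\widetilde T_{\sigma,\theta}$ is a contraction, so membership in $C_{00}$ is meaningful. Thus everything reduces to a growth estimate: $\|T_{\sigma,\theta}^n\|$ grows at most polynomially in $n$, while the normalizing factor decays like $(1+\sigma^2)^{-n/2}$.

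To get the polynomial bound, write $T_{\sigma,\theta}$ as an upper triangular $3\times3$ operator matrix with diagonal entries $M_{z_1}$, $S$, $e^{i\theta}$, each of norm one. Its off-diagonal entries are $\sigma J$ and $\sigma(1\otimes 1)$, each of norm $\sigma$, since $J$ and $1\otimes1$ are isometries. Computing $T_{\sigma,\theta}^n$ by multiplying upper triangular matrices, the diagonal entries are $M_{z_1}^n$, $S^n$, $e^{in\theta}$. The $(1,2)$ and $(2,3)$ entries are
\[
\sigma\sum_{k=0}^{n-1}M_{z_1}^{\,n-1-k}JS^{k},\qquad \sigma\sum_{k=0}^{n-1}e^{ik\theta}S^{\,n-1-k}(1\otimes1).
\]
The $(1,3)$ entry is
\[
\sigma^2\sum_{j+k+l=n-2}M_{z_1}^{\,j}JS^{k}(1\otimes 1)e^{il\theta},
\]
consistent with formula \eqref{T-n} in the non-cyclicity proposition. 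By the triangle inequality these entries have norms at most $n\sigma$, $n\sigma$, and $\binom{n}{2}\sigma^2\le n^2\sigma^2$, respectively. Bounding the norm of a $3\times 3$ operator matrix by the sum of the norms of its entries gives
\[
\|T_{\sigma,\theta}^n\|\le 3+2n\sigma+n^2\sigma^2 .
\]
One could sharpen this using the orthogonality of the terms with different $z_1$-degree (as in Lemma~\ref{OPOS}), but that is not needed.

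Consequently,
\[
\|\widetilde T_{\sigma,\theta}^{\,n}\|\le \frac{3+2n\sigma+n^2\sigma^2}{(1+\sigma^2)^{n/2}}\longrightarrow 0
\]
as $n\to\infty$, because $\sigma>0$ makes $1+\sigma^2>1$. Hence $\widetilde T_{\sigma,\theta}^{*n}x\to0$ and $\widetilde T_{\sigma,\theta}^{n}x\to 0$ for every $x$, so $\widetilde T_{\sigma,\theta}\in C_{00}$. The only step requiring care is the bookkeeping of the $(1,3)$ entry of $T_{\sigma,\theta}^n$, namely counting the terms so the bound stays polynomial. This can be cross-checked against \eqref{T-n}, whose first coordinate has at most $n^2$ monomial terms, each of norm at most $\sigma^2$.
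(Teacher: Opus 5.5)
Your proof is correct, but it takes a different route from the paper's.

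\textbf{How the paper argues.} The paper handles the two halves of $C_{00}$ separately.
\begin{itemize}
\item For $\widetilde T^{n}\to 0$, it splits $u=(F,f,\alpha)^t$ into its three components. It then uses the componentwise growth estimates $\|T^n_{\sigma,\theta}(0,f,0)^t\|^2\le(1+n\sigma^2)\|f\|^2$ and $\|T^n_{\sigma,\theta}(0,0,\alpha)^t\|^2\le C_\sigma(1+n^2)|\alpha|^2$.
\item For $\widetilde T^{*n}\to 0$, it writes out the block form of $T^*_{\sigma,\theta}$. It checks on the monomials $(z_1^mz_2^k,0,0)^t$, $(0,z^k,0)^t$, $(0,0,1)^t$ that each $T^*$-orbit reaches the scalar slot $\mathbb C$ after finitely many steps and then stays bounded. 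It concludes by density together with $\sup_n\|\widetilde T^{*n}\|\le 1$.
\end{itemize}

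\textbf{How your argument differs.} You prove one operator-norm bound, $\|T^n_{\sigma,\theta}\|\le 3+2n\sigma+n^2\sigma^2$, directly from the upper-triangular structure. Then $\|\widetilde T^{*n}\|=\|\widetilde T^{n}\|\to 0$ settles both halves at once, with no adjoint computation and no density argument. Your bookkeeping of the $(1,3)$ entry (there are $\binom{n}{2}$ triples $(j,k,l)$ with $j+k+l=n-2$) is right and matches the paper's formula for $T^n_{\sigma,\theta}e_3$.

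\textbf{What each approach buys.}
\begin{itemize}
\item Your route gives a strictly stronger conclusion: uniform stability with a geometric rate, equivalently $r(\widetilde T_{\sigma,\theta})\le(1+\sigma^2)^{-1/2}<1$.
\item It uses only that the diagonal blocks have norm one and the off-diagonal blocks have norm $\sigma$. Hence it applies essentially verbatim to any $3$-Brownian unitary as defined in the introduction, normalized by $\sqrt{1+\sigma^2}$.
\item The paper's first-half estimates are in fact already uniform in $u$, since they give $\|T^n_{\sigma,\theta}u\|\le C(1+n)\|u\|$. So the paper had operator-norm decay in hand and could have finished the adjoint half exactly as you do.
\item What the paper's approach adds is explicit orbit information: the orthogonality-in-$z_1$-degree growth estimates for individual components, and the absorption of $T^*$-orbits of monomials into $\mathbb C$. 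None of this is needed for the $C_{00}$ statement.
\end{itemize}
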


\begin{proof}
Let $\widetilde T\equiv\widetilde T_{\sigma, \theta}.$ We will show that $\widetilde T^n\to0$ and $\widetilde T^{*n}\to0$ in the strong operator topology. To show $\widetilde T^n\to0$ in SOT, take
 $u=(F,f,\alpha)^t\in\mathcal H$. We have
\[
\|T_{\sigma,\theta}^n u\|
\le
\|T_{\sigma,\theta}^n(F,0,0)^t\|
+\|T_{\sigma,\theta}^n(0,f,0)^t\|
+\|T_{\sigma,\theta}^n(0,0,\alpha)^t\|,\quad n \in \mathbb Z_+.
\]
The first term equals $\|F\|,$ since $M_{z_1}$ is an isometry.
A direct computation as above shows for $ n \in \mathbb Z_+,$
\[
\|T_{\sigma,\theta}^n(0,f,0)^t\|^2
\le (1+\sigma^2 n)\|f\|^2,
\qquad
\|T_{\sigma,\theta}^n(0,0,\alpha)^t\|^2
\le C_\sigma(1+n^2)|\alpha|^2,
\]
for some constant $C_\sigma>0$.
Consequently,
\[
\|\widetilde T^n u\|
\le
\frac{\|F\|+\sqrt{1+\sigma^2 n}\,\|f\|
+\sqrt{C_\sigma(1+n^2)}\,|\alpha|}
{(1+\sigma^2)^{n/2}}
\;{\longrightarrow}\;0  \quad (n \rightarrow \infty),
\]
since the exponential decay dominates the polynomial growth. Hence, 
$\widetilde T^n\to0$ in SOT.

\smallskip

We now prove that $\widetilde T^{*n}\to 0$ in SOT.  
Since $\|\widetilde T\|\le 1$, it suffices to verify the convergence on a dense subset of
$\mathcal H=H^2(\mathbb D^2)\oplus H^2(\mathbb D)\oplus \mathbb C$.
Recall that
\[
T_{\sigma, \theta}=
\begin{bmatrix}
M_{z_1} & \sigma J & 0\\
0 & S & \sigma(1\otimes 1)\\
0 & 0 & e^{i\theta}
\end{bmatrix},
\qquad
\widetilde T_{\sigma, \theta} = \frac{1}{\sqrt{1+\sigma^2}}\,T_{\sigma, \theta},
\]
where $J:H^2(\mathbb D)\to H^2(\mathbb D^2)$ is given by $(Jf)(z_1,z_2)=f(z_2)$.
The adjoint of $T$ has the block form
\[
T^*=
\begin{bmatrix}
M_{z_1}^* & 0 & 0\\
\sigma J^* & S^* & 0\\
0 & \sigma(1\otimes 1)^* & e^{-i\theta}
\end{bmatrix}.
\]
Note that
\[
J^*(z_1^m z_2^k)=
\begin{cases}
z^k, & m=0,\\
0, & m\ge 1,
\end{cases}
\qquad
(1\otimes 1)^*(f)=\langle f,1\rangle,
\]
and that $M_{z_1}^*$ and $S^*$ are the backward shifts on $H^2(\mathbb D^2)$ and
$H^2(\mathbb D)$, respectively.

Since finite linear combinations of monomials are dense, it suffices to check strong
convergence on the following orthonormal basis vectors:
\[
(z_1^m z_2^k,0,0)^t,\qquad (0,z^k,0)^t,\qquad (0,0,1)^t.
\]

\medskip

\noindent\textbf{Case 1.} $x=(0,0,1)^t$.  
Then $T^{*n}x=(0,0,e^{-in\theta})^t$, and hence
\[
\|\widetilde T^{*n}x\| = (1+\sigma^2)^{-n/2} {\longrightarrow}0 \quad (n \rightarrow \infty).
\]

\medskip

\noindent\textbf{Case 2.} $x=(0,z^k,0)^t$, $k\ge 0$.  
If $k\ge 1$, then
\[
T^*(0,z^k,0)^t=(0,z^{k-1},0)^t,
\]
while
\[
T^*(0,1,0)^t=(0,0,\sigma)^t.
\]
Consequently, after at most $k+1$ iterations, the vector $(0,z^k,0)$ enters the scalar component
$\mathbb C$, and thereafter remains bounded in norm by $\max\{1,\sigma\}$. Therefore,
for all $n$,
\[
\|T^{*n}(0,z^k,0)^t\|\le C_k
\]
for some constant $C_k$ independent of $n$. It follows that
\[
\|\widetilde T^{*n}(0,z^k,0)^t\|
\le C_k (1+\sigma^2)^{-n/2} {\longrightarrow} 0  \quad (n \rightarrow \infty) .
\]

\medskip

\noindent\textbf{Case 3.} $x=(z_1^m z_2^k,0,0)^t$, $m,k\ge 0$.  
If $m\ge 1$, then
\[
T^*(z_1^m z_2^k,0,0)^t=(z_1^{m-1}z_2^k,0,0)^t,
\]
and hence
\[
T^{*m}(z_1^m z_2^k,0,0)^t=(z_2^k,0,0)^t.
\]
Applying $T^*$ once more yields
\[
T^*(z_2^k,0,0)^t=(0,\sigma z^k,0)^t.
\]
As in Case~2, after finitely many additional iterations the vector enters the scalar
component and remains uniformly bounded. Thus there exists a constant $C_{m,k}>0$
such that
\[
\|T^{*n}(z_1^m z_2^k,0,0)^t\|\le C_{m,k}
\quad \text{for all } n\ge 1.
\]
Consequently,
\[
\|\widetilde T^{*n}(z_1^m z_2^k,0,0)^t\|
\le C_{m,k}(1+\sigma^2)^{-n/2}  {\longrightarrow} 0  \quad (n \rightarrow \infty) .
\]

\medskip

Combining the above cases, we conclude that $\widetilde T^{*n}x\to 0$ for all basis
vectors $x$. Since $\sup_n\|\widetilde T^{*n}\|\le 1$ and these vectors span a dense
subspace of $\mathcal H$, it follows that
$
\widetilde T^{*n} \xrightarrow{\mathrm{SOT}} 0.
$ This completes the proof.
\end{proof}

\section*{Acknowledgements}

The author thanks A. Zalar for several helpful comments. The author was supported by the ARIS (Slovenian Research and Innovation Agency) research core funding No.\ P1-0288 and grant No.\ J1-60011.

\end{document}